\newtheorem{theorem}{Theorem}
\newtheorem{proposition}[theorem]{Proposition}
\newtheorem{lemma}[theorem]{Lemma}
\newtheorem{corollary}[theorem]{Corollary}
\theoremstyle{definition}
\newtheorem{conjecture}[theorem]{Conjecture}
\newcommand{\cref}[1]{Corollary~\ref{c.#1}}
\numberwithin{equation}{section}
\numberwithin{theorem}{section}
\newcommand{\R}{\mathbb{R}}
\renewcommand{\bar}{\overline}
\renewcommand{\tilde}{\widetilde}
\begin{document}

\title[Stable solutions with the half Laplacian]{On stable solutions 
for boundary reactions:\\ a De Giorgi-type result in dimension 4+1}

\begin{abstract}
We prove that every bounded stable solution  of 
\[ (-\Delta)^{1/2} u  + f(u) =0  \qquad \mbox{in }\R^3\]
is a 1D profile, i.e., $u(x)= \phi(e\cdot x)$ for some $e\in \mathbb S^2$, where $\phi:\R\to \R$ is a nondecreasing  bounded stable solution in dimension one. 
This proves the De Giorgi conjecture in dimension $4$ for the half-Laplacian.
Equivalently, 
we give a positive answer to the De Giorgi conjecture for boundary reactions in $\R^{d+1}_+=\R^{d+1}\cap \{x_{d+1}\geq 0\}$ when $d = 4$, by proving that all critical points of 
$$
\int_{\{x_{d+1\geq 0}\}} \frac12 |\nabla U|^2 \,dx\, dx_{d+1} + \int_{\{x_{d+1}=0\}}  \frac 1 4 (1-U^2)^2  \,dx
$$
that are monotone in $\R^d$ (that is, up to a rotation, $\partial_{x_d} U>0$) are one dimensional.

Our result is analogue to the fact that stable embedded minimal surfaces in $\R^3$ are planes. Note that the corresponding result about stable solutions to the classical Allen-Cahn equation (namely, when the half-Laplacian
is replaced by the classical Laplacian)
is still open.

\end{abstract}

\author[A. Figalli]{Alessio Figalli}
\address{ETH Z\"urich, Department of Mathematics, R\"amistrasse 101, 8092 Z\"urich, Switzerland.}
\email{alessio.figalli@math.ethz.ch}

\author[J. Serra]{Joaquim Serra}
\address{ETH Z\"urich, Department of Mathematics, R\"amistrasse 101, 8092 Z\"urich, Switzerland.}
\email{joaquim.serra@math.ethz.ch}

\keywords{}
\date{\today}

\maketitle

\section{introduction}

\subsection{De Giorgi conjecture on the Allen-Cahn equation}
In 1978, De Giorgi stated the following famous conjecture \cite{DG}:
\begin{conjecture}\label{DGconj}{\it 
Let $u\in C^2(\R^{d})$ be a solution of the Allen-Cahn equation
\begin{equation}\label{classic}
-\Delta u = u-u^3, \quad  |u|\le1,
\end{equation} 
satisfying $\partial_{x_{d}} u >0$.  Then, if $d\le 8$,  all level sets $\{u=\lambda\}$ of $u$ must be  hyperplanes.}
\end{conjecture}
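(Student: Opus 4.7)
The plan is to follow the Ambrosio--Cabr\'e strategy in low dimensions and then attempt to push it to $d\le 8$ via a blow-down analysis in the spirit of Savin.

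First, I would exploit the stability of $u$ that is implied by monotonicity. Differentiating \eqref{classic} in $x_d$ gives $-\Delta(\partial_{x_d} u) = (1-3u^2)\partial_{x_d} u$, so $\varphi := \partial_{x_d} u > 0$ is a positive solution of the linearized operator $L := -\Delta - (1-3u^2)$. A standard argument (writing a test function as $\xi = \eta\varphi$ and integrating by parts) yields the stability inequality
\[
\int_{\R^d}(3u^2-1)\xi^2\,dx \le \int_{\R^d}|\nabla \xi|^2\,dx \qquad \forall\,\xi\in C^\infty_c(\R^d).
\]
In parallel, a sliding-type comparison of $u$ against its asymptotic limits as $x_d\to\pm\infty$ yields the sharp energy bound $\int_{B_R}\bigl(\tfrac12|\nabla u|^2+\tfrac14(1-u^2)^2\bigr)\,dx \le CR^{d-1}$.

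Next, following Ambrosio--Cabr\'e, for each $i$ set $v_i:=\partial_{x_i} u$, note that $Lv_i=0$, and consider the quotient $\sigma_i:=v_i/\varphi$. A direct computation gives the weighted divergence equation
\[
\operatorname{div}(\varphi^2\nabla \sigma_i)=0.
\]
A Liouville-type argument (testing with $\sigma_i\eta^2$ and using a logarithmic cutoff $\eta$) shows that if $\sigma_i$ is bounded and $\int_{B_R}\varphi^2 \le CR^2$, then $\sigma_i$ must be constant. Constancy of all $\sigma_i$ forces $\nabla u$ to be parallel to a fixed direction, so $u$ is one dimensional and its level sets are hyperplanes. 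The energy estimate gives $\int_{B_R}\varphi^2\le CR^{d-1}$, which closes the argument directly only for $d\le 3$, recovering the theorems of Ghoussoub--Gui and Ambrosio--Cabr\'e.

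For $d\ge 4$ the test-function argument fails and one must switch to a blow-down strategy: study $u_{\ep}(x) := u(\ep^{-1}x)$ as $\ep\to 0$. By Modica--Mortola $\Gamma$-convergence, the level sets of $u_\ep$ subconverge to a hypersurface which, thanks to monotonicity, is the boundary of a local perimeter minimizer. Simons' theorem then rules out non-planar stable minimizing cones in $\R^d$ for $d\le 7$, so the blow-down is a hyperplane. Finally, an improvement-of-flatness argument for \eqref{classic} would propagate flatness from infinity back to $u$ itself. Executing this last step is the main obstacle: Savin's improvement of flatness requires the a priori hypothesis that $u(x',x_d)\to \pm 1$ uniformly as $x_d\to\pm\infty$, and removing this assumption is exactly what remains open in the classical local case for $4\le d\le 8$. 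The rigidity provided by the extension problem on $\R^{d+1}_+$ in the nonlocal setting of the present paper appears to be precisely what is missing in the purely local case.
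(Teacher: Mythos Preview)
The statement you were asked to prove is Conjecture~\ref{DGconj}, and the paper does not prove it: it is stated as De Giorgi's original conjecture and is discussed only as background and motivation. There is therefore no ``paper's own proof'' to compare against. What the paper records (Section on ``Results on the De Giorgi conjecture'') is exactly what your proposal recovers: Ghoussoub--Gui and Ambrosio--Cabr\'e settle $d=2,3$; Savin settles $4\le d\le 8$ \emph{only} under the extra limit assumption \eqref{limits}; del Pino--Kowalczyk--Wei give counterexamples for $d\ge 9$. Your write-up is an honest survey of these approaches, not a proof, and you explicitly concede this in the last paragraph.

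Two points of accuracy are worth correcting. First, the role of the assumption \eqref{limits} in Savin's argument is not, as you say, that improvement of flatness itself ``requires'' the limits; rather, \eqref{limits} is what guarantees that a monotone solution is a local \emph{minimizer} of $\mathcal E_1$ (see the paper's remark after \eqref{limits}), and it is minimality---not mere stability---that feeds into the $\Gamma$-convergence/blow-down step and into Savin's regularity theory. This distinction is exactly why the conjecture remains open for $4\le d\le 8$: without \eqref{limits} one only has stability, and the improvement-of-flatness machinery has not been established for stable (non-minimizing) solutions of \eqref{classic}. Second, your dimension bookkeeping slips: Simons' classification of stable minimizing cones goes up to ambient dimension $7$, but the relevant consequence for monotone $u$ is the Bernstein theorem for minimal \emph{graphs}, which holds up to $d=8$ (item (i$'$) in the paper). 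Finally, your closing sentence overstates what the present paper contributes: its results concern the half-Laplacian and say nothing about the missing step for the classical equation \eqref{classic}.
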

To motivate this conjecture, we need to explain its relation to minimal surfaces.

\subsection{Allen-Cahn vs. minimal surfaces}
\label{sect:AC-min}
It is well-known that  \eqref{classic} is the condition of vanishing first variation for the Ginzburg-Landau energy 
\[
\mathcal E_1(v)  := \int_{\R^{d}} \bigg(\,  \frac 1 2  |\nabla v|^2  + \frac 1 4 (1-v^2)^2 \bigg) dx.
\]
By scaling, if $u$ is a local minimizer of $\mathcal E_1$, then $u_\varepsilon(x) := u (\varepsilon^{-1}x)$  are local minimizers of the $\varepsilon$-energy 
\begin{equation}
\label{eq:rescaled E}
\mathcal E_{1,\varepsilon}(v) := \frac{1}{\varepsilon}\int_{\R^{d}} \bigg(\,  \frac{\varepsilon^2}2  |\nabla v|^2  + \frac 1 4 (1-v^2)^2 \bigg) dx.
\end{equation}
In \cite{Modica, MM}, Modica and Mortola established the $\Gamma$-convergence of $\mathcal E_{1,\varepsilon}$ to the perimeter functional as $\varepsilon \downarrow 0$. As a consequence, the rescalings $u_\varepsilon$ have a subsequence $u_{\varepsilon_k}$ such that
\[u_{\varepsilon_k} \rightarrow \chi_E -\chi_{E^c}\quad \mbox{in  }L^1_{\rm loc},\]
and $E$ is a local minimizer of the perimeter in $\R^{d}$.
This result was later improved by Caffarelli and Cordoba \cite{CCor},
who showed a density estimate for minimizers of $\mathcal E_{1,\varepsilon}$,
and proved that the super-level sets $\{u_{\varepsilon_k}\ge \lambda\}$ converge locally uniformly (in the sense of Hausdorff distance) to $E$ for each fixed $\lambda\in(-1,1)$. 
Hence, at least heuristically, minimizers of  $\mathcal E_{1,\varepsilon}$ for $\varepsilon$ small should behave similarly to sets of minimal perimeter.

\subsection{Classifications of entire minimal surfaces and De Giorgi conjecture}
\label{sect:min surf DG conj}
Here we recall some well-known facts on minimal surfaces:\footnote{Note that, here and in the sequel, the terminology ``minimal surface'' denotes a critical point of the area functional (in other words, a surface with zero mean curvature).}
\begin{enumerate}
\item[(i)] If $E$ is a local minimizer of the perimeter in $\R^{d}$ with $d\le 7$, then $E$ is a halfspace.
\item[(ii)]   The Simons cone $\bigl\{x_1^2+x_2^2+x_3^2+x_4^2<x_5^2+x_6^2+x_7^2+x_8^2\bigr\}$ is a local minimizer of perimeter in $\R^8$ which is not a halfspace.
\end{enumerate}
Also, we recall that these results hold in one dimension higher if we restrict to minimal graphs:
\begin{enumerate}
\item[(i')] If $E=\bigl\{x_{d}\ge h(x_1,\dots, x_{d-1}) \bigr\}$ is a epigraph in $\R^{d}$,  $\partial E$ is a minimal surface, and $d\le 8$ , then $h$ is affine (equivalently, $E$ is a halfspace).
\item[(ii')]  There is a non-affine entire minimal graph in dimension $d=9$.
\end{enumerate}
These assertions combine several classical results. The main contributions leading to (i)-(ii)-(i')-(ii') are the landmark papers of De Giorgi \cite{DG1,DG-Bern} (improvement of flateness -- Bernstein theorem for minimal graphs), Simons \cite{Simons} (classification of stable minimal cones), and Bombieri, De Giorgi, and Giusti \cite{BDG} (existence of a nontrivial minimal graph in dimension $d=9$, and minimizing property of the Simons cone).

Note that, in the assumptions 
of
Conjecture \ref{DGconj}, the function $u$ satisfies $\partial_{x_{d}}u>0$,
a condition that implies that the super-level sets $\{u\ge\lambda\}$ are epigraphs.
Thus, if we assume that $d\leq 8$, it follows by (ii') and the discussion in Section \ref{sect:AC-min} 
that the level sets of $u_\varepsilon(x)=u(\varepsilon^{-1}x)$ should be close to a hyperplane for $\varepsilon\ll 1$.
Since
$$
\{u_\varepsilon=\lambda\}=\varepsilon\{u=\lambda\},
$$
this means that all blow-downs of $\{u=\lambda\}$ (i.e., all possible limit points of $\varepsilon\{u=\lambda\}$ as $\varepsilon\downarrow0$) are hyperplanes. Hence, the conjecture of De Giorgi asserts that, for this to be true, the level sets of $u$ had to be already hyperplanes.

\subsection{Results on the De Giorgi conjecture}
Conjecture \ref{DGconj} was first proved, about twenty years after it was raised, in dimensions  $d=2$ and $d=3$, by Ghoussoub and Gui \cite{GG} and  Ambrosio and Cabr\'e \cite{AmbrC}, respectively.
Almost ten years later, in the celebrated paper  \cite{Savin}, Savin attacked the conjecture in the dimensions $4\le d\le 8$, and he succeeded in proving it under the additional assumption
\begin{equation}\label{limits}
 \lim_{x_{d}\to \pm \infty} u = \pm 1.
\end{equation}
Short after, Del Pino, Kowalczyk, and Wei \cite{dPKW} established the existence of a counterexample in dimensions  $d\ge 9$ .

It is worth mentioning that the extra assumption \eqref{limits} in \cite{Savin} is only used to guarantee that $u$ is a local minimizer of $\mathcal E_1$. Indeed, while in the case of minimal surfaces epigraphs are automatically minimizers of the perimeter, the same holds for monotone solutions of \eqref{classic} only under the additional assumption \eqref{limits}.

\subsection{Monotone vs. stable solutions}
\label{sect:monotone stable}
Before introducing the problem investigated in this paper, we make a connection between monotone and stable solutions.

 It is well-known (see \cite[Corollary 4.3]{AAC}) that monotone solutions to \eqref{classic} in $\R^{d}$ are stable solutions, i.e., the second variation of $\mathcal E_1$ is nonnegative.
Actually, in the context of monotone solutions it is natural to consider the two limits $$u^{\pm} := \lim_{x_{d} \to \pm\infty} u,$$ which are functions of the first $d-1$ variables $x_1, \dots, x_{d-1}$ only, and one can easily prove that $u^\pm$  are stable solutions of \eqref{classic} in $\R^{d-1}$. If one could show that these functions
are 1D, then the  results of Savin \cite{Savin}
would imply that $u$ was also 1D.

In other words, the following implication holds:
$$
\text{stable solutions to \eqref{classic} in $\R^{d-1}$ are 1D}
\quad \Rightarrow \quad  \text{De Giorgi conjecture holds in $\R^{d}$.}
$$

\subsection{Boundary reaction and line tension effects}
A natural variant of the Ginzburg-Landau energy, first introduced by Alberti, Bouchitt\'e, and Seppecher in \cite{ABS} and then studied by  
Cabr\'e and Sol\`a-Morales in \cite{C-SM},
consists in studying a Dirichlet energy with boundary potential on a half space $\R^{d+1}_+:=\{x_{d+1}> 0\}$ (the choice of considering $d+1$ dimensions will be clear by the discussion in the next sections). In other words, one considers the energy functional 
\[
\mathcal J(V) := \int_{\R^{d+1}_+}\frac12 |\nabla V|^2 \,dx\,dx_{d+1} + \int_{\{x_{d+1}=0\}} F(V) \,dx\,,
\]
where $F:\R\to \R$ is some potential. Then, the Euler-Lagrange equation corresponding to $\mathcal J$ is given by
\begin{equation}
\label{eq:EL d+1}
\left\{
\begin{array}{ll}
\Delta U=0 &\text{in }\R^{d+1}_+,\\
\partial_\nu U=-f(U) &\text{on }\{x_{d+1}=0\},
\end{array}
\right.
\end{equation}
where $f=F'$, and $\partial_\nu U=-\partial_{x_{d+1}}U$ is the exterior normal derivative. When
$f(U)= \sin(c\, U)$, $c \in \R$, the above problem is called the Peierls-Navarro equation and appears in a model of dislocation of crystals \cite{GM, Toland}. Also, the same equation is central for
the analysis of boundary vortices for soft thin films in \cite{K}.

\subsection{Non-local interactions}
To state the analogue of the De Giorgi conjecture in this context we first recall that, for a harmonic function $V$, the energy $\mathcal J$
can be rewritten in terms of its trace $v:=V|_{x_{d+1}=0}$. More precisely, 
a classical computation shows that (up to a multiplicative dimensional constant)
the Dirichlet energy of $V$ is equal to the $H^{1/2}$ energy of $v$:
$$
\int_{\R^{d+1}_+} |\nabla V|^2 \,dx_1\cdots dx_d\, dx_{d+1}
=\iint_{\R^{d}\times \R^d}  \frac{|v(x)-v(y)|^2}{|x-y|^{d+1}}\,dx\,dy
$$
(see for instance \cite{CafSi}). 
Hence, instead of $\mathcal J$, one can consider the energy functional
$$
\mathcal E(v):=\iint_{\R^{d}\times \R^d} 
\frac12 \frac{|v(x)-v(y)|^2}{|x-y|^{d+1}}\,dx\,dy
+\int_{\R^d}F(v(x))\,dx
$$
and because harmonic functions minimize the Dirichlet energy, one can easily prove  that
$$
\text{$U$ is a local min. of $\mathcal J$ in $\R^{d+1}_+$}\quad \Leftrightarrow\quad
\left\{
\begin{array}{l}
\text{$u= U|_{x_{d+1}=0}$ is a local min. of $\mathcal E$ in $\R^d$}\\
\text{and $U$ is the harmonic extension of $u$.}
\end{array}
\right.
$$
Hence, in terms of the function $u$,
the Euler-Lagrange equation \eqref{eq:EL d+1} corresponds to the first variation of $\mathcal E$, namely
\begin{equation}\label{eq}
(-\Delta)^{1/2} u  + f(u) =0\quad \mbox{in }\R^d,
\end{equation} 
where
$$
(-\Delta)^{1/2}u(x)=\text{p.v.}\int_{\R^d}\frac{u(x)-u(y)}{|x-y|^{d+1}}\,dy.
$$

\subsection{$\Gamma$-convergence of nonlocal energies to the classical perimeter, and the De Giorgi conjecture for the $1/2$-Laplacian}
Analogously to what happens with the classical Allen-Cahn equation, there is a  connection between solutions of $(-\Delta)^{1/2} u = u-u^3$  and  minimal surfaces. Namely, if $u$ is a local minimizer of $\mathcal E$ in $\R^d$ with $F(u)= \frac 1 4 (1-u^2)^2$, then the rescaled functions $u_\varepsilon(x) = u (\varepsilon^{-1}x)$  are local minimizers of the $\varepsilon$-energy 
\[
\mathcal E_{\varepsilon}(v)  :=\frac{1}{\varepsilon \log(1/\varepsilon)} \bigg(  \iint_{\R^d\times \R^d} \frac{\varepsilon}{2} \frac{|v(x)-v(y)|^2}{|x-y|^{d+1}} \,dx\,dy   +  \int_{\R^d}\frac 1 4 (1-v^2)^2\, dx\bigg).
\]
As happened for the energies $\mathcal E_{1,\varepsilon}$ in \eqref{eq:rescaled E}, the papers \cite{ABS,  MdM}  established the $\Gamma$-convergence of $\mathcal E_{\varepsilon}$ to the perimeter functional as $\varepsilon \downarrow 0$, as well as
 the existence of a subsequence $u_{\varepsilon_k}$ such that
\[u_{\varepsilon_k} \rightarrow \chi_E -\chi_{E^c}\quad \mbox{in  }L^1_{\rm loc},\]
where $E$ is a local minimizer of the  perimeter in $\R^d$.
Moreover, Savin and Valdinoci \cite{SV-1} proved density estimates for minimizers of $\mathcal E_{\varepsilon}$, implying that $\{u_{\varepsilon_k}\ge \lambda\}$ converge locally uniformly  to $E$ for each fixed $\lambda\in(-1,1)$.

Hence, the discussion in Section \ref{sect:min surf DG conj} motivates the validity of the De Giorgi conjecture when $-\Delta$ is replaced with $(-\Delta)^{1/2}$,
namely: 
\begin{conjecture}\label{DGconj2}{\it 
Let $u\in C^2(\R^{d})$ be a solution of the fractional Allen-Cahn equation
\begin{equation}\label{fract}
(-\Delta)^{1/2} u = u-u^3, \quad  |u|\le1,
\end{equation} 
satisfying $\partial_{x_{d}} u >0$. Then, if $d\le 8$, all level sets $\{u=\lambda\}$ of $u$ must be  hyperplanes.}
\end{conjecture}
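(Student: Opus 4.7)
The plan is to reduce the conjecture for monotone solutions to a classification problem for stable solutions in one lower dimension, and then to attack stability by a blow-down argument coupled to the classification of stable minimal cones. Following Section \ref{sect:monotone stable} (whose argument carries over to the half-Laplacian essentially verbatim, once one verifies that monotone solutions of \eqref{fract} are stable and that the limits $u^{\pm}$ satisfy the same equation in $\R^{d-1}$), a monotone solution $u$ of \eqref{fract} in $\R^d$ is automatically stable, and $u^\pm := \lim_{x_d\to\pm\infty} u$ are bounded stable solutions in $\R^{d-1}$. Once these limits are shown to be 1D, an adaptation of Savin's improvement-of-flatness machinery \cite{Savin} to the boundary-reaction setting should force $u$ itself to be 1D. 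Thus the core task reduces to classifying bounded stable solutions of $(-\Delta)^{1/2}u+f(u)=0$ in $\R^{d-1}$.

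To classify stable solutions, I would work with the harmonic extension $U$ on $\R^d_+$ and analyze blow-downs $u_R(x):=u(Rx)$. Using the $\Gamma$-convergence of $\mathcal E_\varepsilon$ to the perimeter \cite{ABS,MdM} together with the density estimates of Savin and Valdinoci \cite{SV-1}, one expects a subsequence $u_{R_k}$ to converge in $L^1_{\mathrm{loc}}$ to $\chi_E-\chi_{E^c}$, with $E$ a local minimizer of perimeter in $\R^{d-1}$. Because blow-downs of minimizing sets are cones, and because stability should pass to the limit along the rescalings, $\partial E$ would be a stable minimal cone in $\R^{d-1}$.

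At this point the argument bifurcates by dimension. In the critical case $d=4$, so that $d-1=3$, the classical theorem of Fischer-Colbrie and Schoen, do Carmo and Peng, and Pogorelov asserts that every stable complete embedded minimal surface in $\R^3$ is a plane; this is the geometric analogue highlighted in the abstract. Hence $\partial E$ is a hyperplane, every blow-down of the level sets of $u$ is a hyperplane, and the improvement-of-flatness step then upgrades asymptotic planarity to exact 1D symmetry. Combined with the reduction above, this yields Conjecture \ref{DGconj2} for $d=4$.

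The main obstacle, the step that really requires new ideas, is to make the blow-down stability argument quantitative in the nonlocal setting: one has to propagate stability from the equation on $\R^{d-1}$, through the extension on $\R^d_+$, across the $\Gamma$-convergence, and onto the limiting perimeter functional, while keeping precise control on the geometry of the nodal set (excess, sheeting, surface energies) to guarantee that the limit is not merely minimizing but also stable and supported on a single multiplicity-one sheet. A secondary, purely geometric obstruction is that the approach reaches only $d=4$: for $d\in\{5,\dots,8\}$ one would need the corresponding classification of stable minimal hypersurfaces in $\R^{d-1}$ for $d-1\le 7$, a notorious open problem already for $d-1=4$; so without a genuinely new idea, the present scheme cannot recover the full range of Conjecture \ref{DGconj2}.
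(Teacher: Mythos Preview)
Your reduction step (monotone solutions in $\R^d$ $\Rightarrow$ stable solutions in $\R^{d-1}$, then invoke \cite{S-new}) matches the paper exactly. The genuine gap is in how you propose to classify bounded stable solutions of \eqref{fract} in $\R^{3}$.

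Your plan is: blow down, invoke the $\Gamma$-convergence of \cite{ABS,MdM} and the density estimates of \cite{SV-1}, obtain a stable minimal cone in $\R^3$, and then cite Fischer--Colbrie--Schoen / do~Carmo--Peng. The problem is that every one of those black boxes is stated and proved for \emph{local minimizers}, not for stable critical points. For a solution that is merely stable there is no a priori energy bound of the form $\mathcal E^{\rm Sob}(u;B_R)\le CR^{d-1}$, so you do not even have the compactness needed to extract a blow-down limit in $L^1_{\rm loc}$, let alone to guarantee that the limit is a perimeter minimizer, a stable minimal surface, or a multiplicity-one sheet. You flag this as ``the main obstacle,'' but it is not a technical detail to be filled in: it is precisely the content of the theorem. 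Indeed, the paper points out that the analogous classification of stable solutions of the classical Allen--Cahn equation in $\R^3$ is \emph{still open}, even though Fischer--Colbrie--Schoen has been available for forty years; if your scheme worked, it would settle that case too.

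The paper takes a completely different route that never passes through minimal surfaces. It first proves, for stable solutions in any dimension, a nearly sharp energy bound $\mathcal E^{\rm Sob}(u;B_R)\le CR^{d-1}\log^2 R$ (Proposition~\ref{prop}). The mechanism is to perturb $u$ by compactly supported ``translation-like'' diffeomorphisms $\mathcal P^i_{t,\boldsymbol v}$, use stability to bound the second variation, and convert this into a BV estimate (Lemmas~\ref{quadratic} and~\ref{quadratic1}); a sharp interpolation via heat-kernel mollification (Lemma~\ref{newlemma}) then turns BV control into $H^{1/2}$ control. In dimension $3$ only, a bootstrap argument (Steps~1--2 of the proof of Theorem~\ref{thm}) upgrades $R^2\log^2 R$ to the optimal $R^2\log R$. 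With this sharp energy bound in hand, a logarithmic cutoff $\varphi^2$ with scale $\log\log R$ forces $\partial_{\boldsymbol v}u$ to have a sign in every ball, and hence $u$ is 1D. No blow-down, no $\Gamma$-convergence, no stable minimal surface classification is used.
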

In this direction, Cabr\'e and Sol\`a-Morales proved the conjecture for $d=2$ \cite{C-SM}. Later, Cabr\'e and Cinti \cite{CC1} established Conjecture \ref{DGconj2} for $d=3$. 
Very recently, under the additional assumption \eqref{limits}, Savin has announced in \cite{S-new}
a proof of Conjecture \ref{DGconj2} in the remaining dimensions $4\le d\le 8$.
Thanks to the latter result, the relation between monotone and stable solutions explained in Section \ref{sect:monotone stable} holds also in this setting.

\subsection{Stable solutions vs. stable minimal surfaces}
Exactly as in the setting of Conjecture \ref{DGconj}, given $u$ as in Conjecture \ref{DGconj2} it is natural to introduce the two limit functions $u^{\pm} := \lim_{x_{d} \to \pm\infty} u$. These functions depend only on the first $d-1$ variables $x_1, \dots, x_{d-1}$, and are stable solutions of \eqref{fract} in $\R^{d-1}$.

As mentioned at the end of last section,
 the classification of stable solutions to \eqref{fract} in $\R^{d-1}$, $3\le d-1\le 7$,
 together with the improvement of flatness for $(-\Delta)^{1/2} u = u-u^3$ announced in \cite{S-new},
  would imply the full Conjecture \ref{DGconj2} in $\R^{d}$.


The difficult problem of classifying stable solutions of \eqref{fract} (or of \eqref{classic}) is connected to the following well-known conjecture for minimal surfaces:
\begin{conjecture}\label{conjstableminimal}{\it 
Stable embedded minimal hypersurfaces in  $\R^d$ are hyperplanes as long as $d\le 7$.}
\end{conjecture}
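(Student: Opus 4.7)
The plan is to mimic the strategy that classifies \emph{area-minimizing} minimal hypersurfaces in low dimensions, replacing the minimizing hypothesis with the stability inequality
\[
\int_\Sigma |A|^2 \varphi^2 \,d\mathcal{H}^{d-1} \le \int_\Sigma |\nabla_\Sigma \varphi|^2 \,d\mathcal{H}^{d-1} \qquad \forall\, \varphi \in C^1_c(\Sigma),
\]
where $A$ denotes the second fundamental form of $\Sigma$. The end goal is to reduce matters, via a blow-down analysis, to the classification of stable minimal cones — which is exactly where the Simons dimensional threshold $d \le 7$ enters.

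First I would combine the stability inequality with Simons' identity
\[
\Delta_\Sigma |A|^2 \;\ge\; -2|A|^4 + 2|\nabla_\Sigma A|^2
\]
and a test function of the form $\varphi = \eta\,|A|^{1+\delta}$ to derive integral curvature bounds in the spirit of Schoen--Simon--Yau. When $d=3$, a logarithmic cut-off already yields $\int_\Sigma |A|^2 < \infty$, and the classical Fischer-Colbrie--Schoen / do~Carmo--Peng / Pogorelov argument (using the Gauss equation together with the Cohn--Vossen inequality) concludes that $\Sigma$ is a hyperplane. For $4 \le d \le 7$ the bounds obtainable by this technique alone are strictly weaker and must be fed into the next step.

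Next I would perform a blow-down: consider the rescaled family $\Sigma_r := r^{-1}\Sigma$ and extract a subsequential varifold limit $C$ as $r\to\infty$. Since stability is scale invariant, each $\Sigma_r$ satisfies the same inequality, and in the limit $C$ is a stationary, stable integral varifold. A monotonicity-type statement for the density ratio $r^{-(d-1)}\mathcal{H}^{d-1}(\Sigma \cap B_r)$ would then upgrade $C$ to a cone. \emph{This is where the main obstacle lies}: for stable but non-minimizing hypersurfaces there is no monotonicity formula forcing cone structure at infinity, and one must instead extract it from the curvature estimates above together with an \emph{a priori} Euclidean area bound $\mathcal{H}^{d-1}(\Sigma \cap B_r) \le C r^{d-1}$. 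Proving such an area bound for an arbitrary stable embedded minimal hypersurface (as opposed to one with finite Morse index or finite total curvature) is precisely the gap that keeps the conjecture open beyond $d=3$.

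Finally, granting that the blow-down produces a stable minimal cone $C$ in $\R^d$, Simons' classification of stable minimal cones in the range $d \le 7$ forces $C = \R^{d-1}$, i.e.\ a hyperplane. An Allard-type $\ep$-regularity theorem then propagates this flatness back to $\Sigma$ at every scale, yielding that $\Sigma$ itself is a hyperplane. In summary, steps 1 and 3 are within reach of the classical literature, while step 2 — the Euclidean area bound needed to launch the blow-down — is the truly hard point, and the reason the conjecture is currently settled unconditionally only in dimension $d=3$.
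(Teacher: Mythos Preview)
The paper does \emph{not} prove this statement: it is stated there as a conjecture, with the explicit remark that ``a positive answer to this conjecture is only known to be true in dimension $d=3$, a result of Fischer-Colbrie and Schoen and Do Carmo and Peng.'' There is therefore no proof in the paper to compare your proposal against.

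Your write-up is accurate as a survey of the problem, but it is not a proof proposal in any meaningful sense, and you say so yourself. Step~2 --- obtaining the Euclidean area growth $\mathcal{H}^{d-1}(\Sigma\cap B_r)\le Cr^{d-1}$ for a general stable embedded minimal hypersurface --- is not a technical nuisance to be ``granted''; it is the entire content of the conjecture once one has Simons' classification of stable cones and Allard regularity at hand. Without it, the blow-down need not exist as a cone (or even as anything with locally finite mass), and Steps~1 and~3 never get to run. Your Schoen--Simon--Yau curvature estimates in Step~1 do give something for $4\le d\le 6$ (integrability of $|A|^p$ for a range of $p$), but they famously do not yield the area bound, and in $d=7$ even those estimates degenerate. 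So what you have written is a correct diagnosis of why the problem is open for $d\ge 4$, together with the classical $d=3$ argument; it should be labeled as such rather than as a proof.
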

A positive answer to this conjecture is only known to be true in dimension $d=3$,  a result of Fischer-Colbrie and Schoen \cite{FishS} and Do Carmo and Peng \cite{dCP}.
Note that, for minimal cones, the conjecture is true (and the dimension 7 sharp) by the results of Simons \cite{Simons} and Bombieri, De Giorgi, and Giusti \cite{BDG}. 

Conjecture \ref{conjstableminimal} above suggests a ``stable De Giorgi conjecture'': 
\begin{conjecture}\label{DGconj3}{\it 
Let $u\in C^2(\R^{d})$ be a stable solution of \eqref{classic} or of \eqref{fract}.
Then, if $d\le 7$, all level sets $\{u=\lambda\}$ of $u$ must be  hyperplanes.}
\end{conjecture}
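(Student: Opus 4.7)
My plan follows the strategy by which Fischer-Colbrie-Schoen and do Carmo-Peng classified stable minimal surfaces in $\R^3$, adapted to the nonlocal setting via the Caffarelli-Silvestre extension, and I focus on the tractable case of \eqref{fract} in $d=3$ (the classical Allen-Cahn case remains open, as the abstract remarks). First I extend $u$ to a harmonic function $U$ on $\R^{4}_+$, so that $U$ solves $\Delta U=0$ in $\R^4_+$ and $-\partial_{x_4}U=f(u)$ on $\{x_4=0\}$. Stability of $u$ is equivalent to
$$\int_{\R^{4}_+} |\nabla \xi|^2 \,dx\,dx_4 + \int_{\R^3} f'(u)\,\xi^2 \,dx \geq 0 \qquad \forall\, \xi \in C_c^1(\overline{\R^{4}_+}).$$

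Second, I insert the test function $\xi = |\nabla U|\,\eta$ with $\eta$ a cutoff depending on $(x,x_4)$. Each derivative $\partial_i U$ is harmonic in $\R^{4}_+$ with a linearised boundary condition, so a computation (the boundary-reaction analogue of the Sternberg-Zumbrun / Farina identity) produces the geometric Poincar\'e inequality
$$\int_{\R^{4}_+} \bigl(|A_U|^2\, |\nabla U|^2 + \bigl|\nabla_T|\nabla U|\bigr|^2\bigr)\eta^2 \,dx\,dx_4 \leq \int_{\R^{4}_+} |\nabla U|^2\, |\nabla \eta|^2 \,dx\,dx_4,$$
where $|A_U|$ is the norm of the second fundamental form of the level sets of $U$ in $\R^4_+$ and $\nabla_T$ denotes the tangential gradient along them.

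Third, and this is the crux, I need a sharp energy growth estimate for stable $u$ in $\R^3$: the Dirichlet energy of $U$ on $B_R^+$ should grow no faster than $R^2\log R$, mirroring the quadratic area growth of stable minimal surfaces in $\R^3$. Given such an estimate, the Ghoussoub-Gui / Ambrosio-Cabr\'e logarithmic cutoff $\eta_R$ with $|\nabla \eta_R|^2 \lesssim (R\log R)^{-2}$ on the annulus $B_{R^2}\setminus B_R$ sends the right-hand side of the geometric Poincar\'e inequality to zero as $R\to\infty$. Hence $|A_U|\equiv 0$ and $\nabla_T|\nabla U|\equiv 0$, which forces the level sets of $u$ to be hyperplanes and therefore $u$ to be one-dimensional.

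The principal obstacle is exactly this sharp energy bound for stable solutions that are neither monotone nor a priori minimising. A promising route is a blow-down/contradiction scheme: using the $\Gamma$-convergence of $\mathcal E_\varepsilon$ to the perimeter together with the Savin-Valdinoci density estimates for minimisers, one expects the rescalings $u_\varepsilon(x)=u(\varepsilon^{-1}x)$ to subconverge to $\chi_E-\chi_{E^c}$ with $\partial E$ a stable minimal surface in $\R^3$, hence a hyperplane by Fischer-Colbrie-Schoen. Turning this qualitative blow-down picture into a quantitative energy estimate, uniform across scales, and ensuring that the stability condition passes to the limit (a delicate point since, unlike minimality, stability is not obviously closed under $L^1_{\rm loc}$ convergence of phase-field solutions), is the technical heart of the proof and where a substantially new idea is required.
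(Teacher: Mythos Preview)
You correctly restrict attention to the case of \eqref{fract} in $d=3$, which is what the paper actually proves (the full conjecture, and in particular the case of \eqref{classic} in $d=3$, remains open). Your proposed route via the extension and a Sternberg--Zumbrun/Farina geometric Poincar\'e inequality is natural, but it is \emph{not} the paper's approach, and it has two genuine gaps.

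The first gap is the one you yourself flag: the sharp energy bound $\int_{B_R^+}|\nabla U|^2\le CR^2\log R$ for \emph{stable} (not monotone, not minimising) solutions. Your proposed blow-down argument is unlikely to work as written: the $\Gamma$-convergence, the Savin--Valdinoci density estimates, and the uniform convergence of level sets all require local minimality, not merely stability, and there is no known mechanism by which stability of $u$ implies stability of the limiting hypersurface $\partial E$ (let alone that $\partial E$ exists as a smooth object). So the reduction to Fischer-Colbrie--Schoen is not justified. The second gap is more subtle: even granting the sharp estimate $R^2\log R$, the standard Ghoussoub--Gui/Ambrosio--Cabr\'e logarithmic cutoff on $B_{R^2}\setminus B_R$ does \emph{not} send the right-hand side of your Poincar\'e inequality to zero. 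Indeed, with $|\nabla\eta|\sim(|x|\log R)^{-1}$ one computes $\int|\nabla U|^2|\nabla\eta|^2\sim(\log R)^{-2}\int_R^{R^2}(\log\rho)/\rho\,d\rho\sim 1$, which stays bounded but does not vanish. A \emph{doubly}-logarithmic cutoff is needed here.

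The paper follows an entirely different strategy that avoids both the extension and minimal-surface theory. Instead of the geometric Poincar\'e inequality, it uses Savin--Valdinoci-type ``translation-like'' perturbations $\mathcal P^i_{t,\boldsymbol v}u(x)=u\bigl((\Psi^i_{t,\boldsymbol v})^{-1}(x)\bigr)$ with three different cutoffs (constant-slope, logarithmic, and doubly-logarithmic). The crucial energy estimate is obtained not by blow-down but by a bootstrap: a non-sharp bound $\mathcal E^{\rm Sob}(u;B_R)\le CR^2\log^2 R$ is first proved in every dimension by combining the translation perturbations with a BV estimate, an interpolation lemma controlling $H^{1/2}$ by $BV$ up to a logarithm, and a Simon-type absorption. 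Then, specifically in $d=3$, this bound is fed back into the perturbation estimate with the logarithmic cutoff and iterated across dyadic scales; a contradiction argument shows that the iteration cannot blow up, yielding the sharp $R^2\log R$. Finally, with the doubly-logarithmic cutoff, the perturbation argument gives directly that $\partial_{\boldsymbol v}u$ has a sign for every $\boldsymbol v$, hence $u$ is one-dimensional.
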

As explained before, the validity of this conjecture would imply both Conjectures \ref{DGconj} and \ref{DGconj2}.

\subsection{Results of the paper}
As of now, Conjecture \ref{DGconj3} has been proved only for $d=2$ (see \cite{BCN, GG} for \eqref{classic}, and
\cite{C-SM} for \eqref{fract}).
The main result of this paper establishes its validity for \eqref{fract} and $d=3$, a case that heuristically corresponds to the classification in $\R^3$ of stable minimal surfaces of \cite{FishS}.
Note that, for the classical case \eqref{classic}, Conjecture \ref{DGconj3} in the case $d=3$ is still open.

This is our main result:
\begin{theorem}\label{thm}
Let $u$ be a stable solution of \eqref{eq} with $d=3$ such that $|u|\leq 1$, and assume that $f\in C^{0,\alpha}([-1,1])$ for some $\alpha>0$. 
Then $u$ is  1D profile, namely,  $u(x)= \phi(e\cdot x)$ for some $e\in \mathbb S^2$, where $\phi:\R\to \R$ is a nondecreasing bounded stable solution to \eqref{eq} in dimension one. 
\end{theorem}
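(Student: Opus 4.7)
The strategy I propose follows the spirit of the Fischer-Colbrie--Schoen classification of stable embedded minimal surfaces in $\R^3$, transported to the boundary reaction setting via the harmonic extension. Let $U$ denote the harmonic extension of $u$ to $\R^4_+$; then $U$ solves $\Delta U = 0$ in $\R^4_+$ with $\partial_\nu U = -f(U)$ on $\{x_4 = 0\}$, and stability of $u$ is equivalent to the inequality
\[
\int_{\{x_4 = 0\}} f'(U)\,\Xi^2\,dx \leq \int_{\R^4_+} |\nabla \Xi|^2\,dx\,dx_4
\]
for every $\Xi \in C^1_c(\overline{\R^4_+})$.

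The first main step is to establish a sharp energy estimate of the form
\[
\int_{B_R \cap \{x_4 = 0\}} F(u)\,dx + \int_{B_R^+} |\nabla U|^2\,dx\,dx_4 \leq C R^2 \log R,
\]
with $C$ independent of $R$. This is the nonlocal analogue of Colding--Minicozzi's quadratic area growth for stable minimal surfaces in $\R^3$, and the $\log R$ factor reflects the critical half-Laplacian scaling in dimension three. I would derive it by inserting test functions of the form $\Xi = |\nabla_x U|\,\eta$ into the stability inequality, combined with a Sternberg--Zumbrun identity for $|\nabla_x U|^2$ adapted to the extension problem.

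With the sharp energy estimate in hand, I would perform a blow-down analysis. Setting $u_R(x) := u(Rx)$, the $\Gamma$-convergence of $\mathcal{E}_\varepsilon$ to the classical perimeter \cite{ABS, MdM} together with the density estimates of Savin--Valdinoci \cite{SV-1} shows that, along a subsequence, $u_{R_k} \to \chi_E - \chi_{E^c}$ in $L^1_{\rm loc}$, with $\partial E$ a minimal surface in $\R^3$. The $R^2 \log R$ energy bound forces $\partial E$ to have quadratic area growth, and passing stability to the limit yields that $\partial E$ is a stable minimal surface; by Fischer-Colbrie--Schoen \cite{FishS}, $\partial E$ is a hyperplane, so every blow-down of $u$ is a halfspace. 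To upgrade this to the one-dimensional symmetry of $u$ itself, I would exploit that each derivative $\varphi_i = \partial_i u$ solves the linearized equation $(-\Delta)^{1/2} \varphi_i + f'(u) \varphi_i = 0$; stability combined with the energy estimate should yield a Liouville-type rigidity forcing the gradients $\nabla u(x)$ and $\nabla u(y)$ to be parallel at every pair of points, whence $u$ depends on a single direction. The two hardest points are the sharp $R^2 \log R$ energy estimate (demanding a delicate nonlocal Sternberg--Zumbrun-type computation on the extension, with the correct logarithmic cutoff) and verifying that stability passes to the blow-down in a sufficiently strong sense to invoke the Fischer-Colbrie--Schoen rigidity on the limit surface.
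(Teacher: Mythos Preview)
Your proposal outlines a natural strategy, but it contains two genuine gaps, and the paper's proof takes a completely different route that avoids both.

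\textbf{First gap: the sharp energy estimate.} Inserting $\Xi = |\nabla_x U|\,\eta$ into the stability inequality together with a Sternberg--Zumbrun identity controls curvature-type quantities of the level sets by $\int |\nabla_x U|^2 |\nabla \eta|^2$; it does not control the Dirichlet energy itself. To close this into a bound on $\int_{B_R^+}|\nabla U|^2$ you would need an a priori Dirichlet bound on the right-hand side, which is exactly what you are trying to prove --- the argument is circular as stated. In the paper, the sharp $R^2\log R$ bound is obtained by an entirely different mechanism: Proposition~\ref{prop} first gives a \emph{non-sharp} $R^2\log^2 R$ bound (via translation-type perturbations $\mathcal P^i_{t,\boldsymbol v}$, a BV estimate from stability, and an interpolation lemma), and then a recursive inequality at dyadic scales,
\[
\mathcal A(k)\le C_f\Bigl(1+\sqrt{\tfrac{1}{2k}\sum_{\ell=1}^{2k}\mathcal A(k+\ell)}\Bigr),\qquad \mathcal A(j):=\frac{\mathcal E^{\rm Sob}(u;B_{2^{2j}})}{j\,2^{4j}},
\]
is bootstrapped: if $\mathcal A$ were unbounded, iterating the recursion produces doubly-exponential growth, contradicting the linear-in-$m$ upper bound coming from the non-sharp estimate.

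\textbf{Second gap: the blow-down.} The $\Gamma$-convergence results of \cite{ABS,MdM} and the density estimates of \cite{SV-1} that you invoke are proved for \emph{local minimizers}, not for stable critical points. For a merely stable solution you do not have the density estimates, so the $L^1_{\rm loc}$ compactness and uniform convergence of level sets to a set of finite perimeter are not available. Even granting convergence, passing stability to the limiting hypersurface so as to apply Fischer-Colbrie--Schoen is a separate varifold-level argument that is not in the literature for this nonlocal problem.

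The paper bypasses the blow-down entirely. Once $\mathcal E^{\rm Sob}(u;B_R)\le CR^2\log R$ is established, a second translation-type perturbation with the double-logarithmic cutoff $\varphi^2$ (Lemma~\ref{quadratic}(3) combined with Lemma~\ref{quadratic1}) gives, for every direction $\boldsymbol v\in\mathbb S^2$,
\[
\biggl(\int_{B_{1/2}}(\partial_{\boldsymbol v} u)_+\biggr)\biggl(\int_{B_{1/2}}(\partial_{\boldsymbol v} u)_-\biggr)\le \frac{C}{\log\log R}\xrightarrow[R\to\infty]{}0,
\]
so $\partial_{\boldsymbol v}u$ has a sign on every ball, hence on all of $\R^3$ by continuity. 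One-dimensionality follows directly, with no appeal to minimal-surface classification.
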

As explained before,
as an application of Theorem \ref{thm} and the improvement of flatness for $(-\Delta)^{1/2} u = u-u^3$ announced in \cite{S-new}, we obtain the following:
\begin{corollary}\label{corDG}
Conjecture \ref{DGconj2} holds true  in dimension $d=4$.
\end{corollary}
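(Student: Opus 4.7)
The plan is to reduce Conjecture~\ref{DGconj2} in dimension $d=4$ to Theorem~\ref{thm} via the implication ``classification of stable solutions in $\R^{d-1}$ implies the De Giorgi conjecture in $\R^d$'' explained in Section~\ref{sect:monotone stable}, together with the improvement of flatness for $(-\Delta)^{1/2}u=u-u^3$ announced in~\cite{S-new}. Let $u:\R^4\to\R$ be a solution of \eqref{fract} with $|u|\le 1$ and $\partial_{x_4}u>0$.

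\emph{Step 1 (pass to limits).} Monotonicity and boundedness guarantee the existence of the pointwise limits
\[
u^{\pm}(x_1,x_2,x_3)\ :=\ \lim_{x_4\to\pm\infty}u(x_1,x_2,x_3,x_4),
\]
and interior Schauder estimates for $(-\Delta)^{1/2}$ together with dominated convergence show that $u^{\pm}$ are bounded classical solutions of~\eqref{fract} in $\R^3$. Monotone solutions are stable (as recalled in Section~\ref{sect:monotone stable}), and by testing the stability inequality of $u$ with functions of the form $\xi(x_1,x_2,x_3)\,\psi(x_4-T)$ for an arbitrary $\xi\in C^1_c(\R^3)$, a fixed bump $\psi$, and $T\to\pm\infty$, the nonlocal interaction between slabs becomes negligible (thanks to the $|x-y|^{-4}$ decay of the kernel) and stability of $u^{\pm}$ in $\R^3$ follows.

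\emph{Step 2 (apply Theorem~\ref{thm}).} Each $u^{\pm}$ is a bounded stable solution of the fractional Allen-Cahn equation in $\R^3$, so by Theorem~\ref{thm} there exist $e^{\pm}\in\S^2$ and nondecreasing bounded stable 1D profiles $\phi^{\pm}$ with
\[
u^{\pm}(x')\ =\ \phi^{\pm}(e^{\pm}\cdot x').
\]

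\emph{Step 3 (improvement of flatness).} The 1D character of the asymptotic profiles $u^{\pm}$ is precisely the input required by the flatness-improvement scheme of~\cite{S-new}. Combined with the monotonicity of $u$ in $x_4$, the sandwich $u^-\le u\le u^+$, and the 1D classification of nondecreasing stable bounded solutions of $(-\Delta)^{1/2}\phi=\phi-\phi^3$ on $\R$ (which are either constants $\pm 1$ or the unique heteroclinic up to translation), one deduces that the monotone epigraphs $\{u\ge\lambda\}$ have asymptotically affine cross-sections at $x_4=\pm\infty$; the iterative flatness improvement of~\cite{S-new} then propagates this rigidity down to every scale, forcing $u$ itself to be 1D in $\R^4$. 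This is Conjecture~\ref{DGconj2} for $d=4$.

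The only non-immediate step, modulo the quoted results, is the bridge in Step~3 from ``$u^{\pm}$ are 1D in $\R^3$'' to ``$u$ is 1D in $\R^4$'': once stated in the correct form, it is handled by the flatness-improvement machinery of~\cite{S-new} together with the 1D classification and the monotonicity of $u$, which in effect allow one to reduce to the setting with the one-sided limits \eqref{limits} after a suitable rotation.
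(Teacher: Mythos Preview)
Your proof follows the same outline as the paper, which does not give a detailed argument for the corollary but simply appeals to the discussion in Sections~1.5 and~1.8 together with Theorem~\ref{thm} and~\cite{S-new}. The only imprecision is in Step~3: the actual bridge from ``$u^\pm$ are 1D in $\R^3$'' to ``$u$ is 1D in $\R^4$'' is not a reduction to \eqref{limits} via rotation, but rather the standard fact (cf.~\cite{AAC}) that 1D monotone profiles are local minimizers, so the monotone solution $u$ sandwiched between the two minimizers $u^-$ and $u^+$ is itself an unconstrained local minimizer, and then the classification of minimizers from~\cite{S-new} applies.
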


A key ingredient behind the proof of Theorem \ref{thm} is the following general energy estimate which holds in every dimension $d\ge 2$:
\begin{proposition}\label{prop}
Let $R\ge1$, $M_o\ge 2$, and $\alpha\in (0,1)$. Assume that $u$ be a stable solution of 
\begin{equation}\label{star}
(-\Delta)^{1/2} u +f(u) =0,\quad  |u|\le 1 \qquad \mbox{in }B_{R}\subset \R^d,
\end{equation} 
where $f:[-1,1]\to \R$ satisfies $\|f\|_{C^{0,\alpha}([-1,1])} \le M_o$. Then there exists a constant $C>0$,
depending only on $d$ and $\alpha$, such that
\[ 
\int_{B_{R/2}} |\nabla u| \le C R^{d-1} \log(M_o R)
\]
and
\[
\iint_{\R^d\times \R^d\setminus B_{R/2}^c\times B_{R/2}^c} \frac{|u(x)-u(y)|^2}{|x-y|^{d+1}} dxdy \le C R^{d-1} \log^2(M_o R).
\]
\end{proposition}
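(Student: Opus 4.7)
My plan is to work in the harmonic extension framework and combine the stability inequality with an energy identity derived from the equation. Let $U:\R^{d+1}_+\to\R$ denote the harmonic extension of $u$, so that $\Delta U = 0$ in $\R^{d+1}_+$, $-\partial_{x_{d+1}} U = f(U)$ on $\{x_{d+1}=0\}$, and stability reads
\[
\int_{\R^{d+1}_+} |\nabla \Phi|^2 \, dx\, dx_{d+1} \ge \int_{\{x_{d+1}=0\}} f'(U) \Phi^2 \, dx
\]
for every test function $\Phi$ whose trace on $\{x_{d+1}=0\}$ is compactly supported in $B_R$. Under this extension, the nonlocal seminorm in the second estimate equals (up to a dimensional constant) the Dirichlet energy of $U$ over a corresponding region of $\R^{d+1}_+$, so both estimates reduce to bounds on $U$.

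The main ingredient for the nonlocal energy bound is a Sternberg--Zumbrun-type argument: apply stability to $\Phi = (U - a) \zeta$ for a constant $a \in [-1, 1]$ and a cutoff $\zeta$ adapted to $B_{R/2} \subset B_R$ in $\R^{d+1}$, and simultaneously test the equation against $(U - a) \zeta^2$. After subtraction the cross terms $\nabla U \cdot \nabla \zeta$ cancel, yielding an inequality of the form
\[
\int_{\{x_{d+1}=0\}}\bigl[f'(U)(U-a)^2 - f(U)(U-a)\bigr]\zeta^2\,dx \ \le\ \int_{\R^{d+1}_+} (U-a)^2 |\nabla \zeta|^2\,dx\,dx_{d+1}.
\]
The right-hand side is controlled by $C R^{d-1} \log(M_o R)$ using $|U-a| \le 2$ and a logarithmic cutoff. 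Choosing $a$ appropriately (for instance $a = \pm 1$ on different regions where $u$ is close to $\mp 1$) and combining this with a second Caccioppoli iteration and the $C^{0,\alpha}$ bound on $f$, one extracts the Dirichlet energy bound $\int_{B_{R/2}^+} |\nabla U|^2 \le C R^{d-1} \log^2(M_o R)$, which is equivalent to the second inequality in the proposition.

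The BV-type bound $\int_{B_{R/2}}|\nabla u| \le C R^{d-1} \log(M_o R)$ would then follow by combining the above Dirichlet energy bound with the interior regularity estimate $\|\nabla u\|_{L^\infty(B_{R/2})} \le C M_o$ (from fractional Schauder theory applied to $(-\Delta)^{1/2} u = -f(u)$ with $|u|\le 1$ and $\|f\|_{C^{0,\alpha}} \le M_o$). A Cauchy--Schwarz interpolation between the $L^\infty$ and $L^2$ bounds then produces the desired $L^1$ estimate, the square root effectively halving the power of $\log$.

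The principal difficulty is that stability alone controls a quantity weighted by the indefinite sign $f'(u)$, so one must invoke the equation to convert this into coercive control on the Dirichlet energy $|\nabla U|^2$; this conversion introduces the logarithmic loss responsible for the $\log^2$ factor in the nonlocal estimate. A secondary technical point is the unusual integration region $\R^d\times\R^d\setminus B_{R/2}^c \times B_{R/2}^c$ in the second estimate, which requires separately handling the ``far-field'' contribution (pairs with one point in $B_{R/2}$ and the other far outside $B_R$); for those pairs the kernel singularity is integrable and the $L^\infty$ bound $|u|\le 1$ alone suffices.
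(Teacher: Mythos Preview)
Your Sternberg--Zumbrun step has a fatal cancellation. Testing the equation with $(U-a)\zeta^2$ gives
\[
\int_{\R^{d+1}_+}|\nabla U|^2\zeta^2 + 2\int_{\R^{d+1}_+}\zeta(U-a)\nabla U\cdot\nabla\zeta = -\int_{\{x_{d+1}=0\}} f(U)(U-a)\zeta^2,
\]
while expanding $|\nabla((U-a)\zeta)|^2$ in the stability inequality produces the \emph{same} two bulk terms $\int|\nabla U|^2\zeta^2 + 2\int\zeta(U-a)\nabla U\cdot\nabla\zeta$ plus $\int(U-a)^2|\nabla\zeta|^2$. When you subtract, both Dirichlet terms disappear, and the inequality you display contains no $|\nabla U|^2$ at all---only boundary terms in $f$ and $f'$. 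So there is nothing from which to ``extract the Dirichlet energy bound $\int_{B_{R/2}^+}|\nabla U|^2\le CR^{d-1}\log^2(M_oR)$''; the quantity you want has been cancelled out. (A secondary issue: $f$ is only $C^{0,\alpha}$, so $f'$ need not exist and stability cannot be written in the form you use.) The bare Caccioppoli inequality coming from the equation alone gives only $\int|\nabla U|^2\zeta^2\le C\int(U-a)^2|\nabla\zeta|^2 + C\int|f(U)(U-a)|\zeta^2$, and the last term is of size $M_oR^d$, which is far too large.

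The paper's argument is entirely different and does not pass through the extension. It uses ``translation-like'' perturbations $\mathcal P^0_{t,\boldsymbol v}u(x)=u(x-t\varphi^0(x)\boldsymbol v)$: stability (via a $\min/\max$ trick, Lemma~\ref{quadratic1}) gives
\[
\int_{B_{1/2}}|\nabla v|\le C\bigl(1+\sqrt{\mathcal E^{\rm Sob}(v;B_1)}\bigr),
\]
while a sharp interpolation (Lemma~\ref{newlemma}) gives $\mathcal E^{\rm Sob}(v;B_1)\le C\log L_o\bigl(1+\int_{B_2}|\nabla v|\bigr)$ from the Lipschitz bound. Combining these yields $\int_{B_{1/2}}|\nabla v|\le C\delta^{-1}\log M_o+\delta\int_{B_2}|\nabla v|$, and Simon's abstract absorption lemma (Lemma~\ref{lem_abstract}) closes this into a clean $BV$ estimate. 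Only \emph{then} is the $H^{1/2}$ energy bound deduced from the $BV$ bound via the interpolation lemma---the opposite order from what you propose.
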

Note that the estimates in Proposition \ref{prop} differ from being sharp by a factor $\log(M_o R)$
(just think of the case when $u$ is a 1D profile).
However, for stable solutions of \eqref{star} in $\R^3$ we are able to bootstrap these non-sharp estimates to sharp ones, from which Theorem \ref{thm} follows easily.

\subsection{Structure of the paper}
In the next section we collect all the basic estimates needed for the proof of Proposition \ref{prop}.
Then, in Section \ref{sect:prop} we prove Proposition \ref{prop}. Finally, in Section \ref{sect:thm}
we prove Theorem \ref{thm}.
\\

{\it Acknowledgments:} both authors are supported by ERC Grant ``Regularity and Stability in Partial Differential Equations (RSPDE)''.

\section{Ingredients of the proofs}
We begin by introducing some notation.

Given $R>0$, we define the energy of a function inside $B_R\subset \R^d$ as
$$
\mathcal E(v;B_R) :=
\iint_{\R^d\times \R^d\setminus B_R^c\times B_R^c} \frac{1}{2}\frac{|v(x)-v(y)|^2}{|x-y|^{d+1}}\,dx\,dy
+\int_{B_R}F(v(x))\,dx,
$$
where $B_R^c=\R^d\setminus B_R$, and $F$ is a primitive of $f$.
Note that equation \eqref{star} is the condition of vanishing first variation for the energy functional $\mathcal E(\,\cdot\, ; B_R)$.

We say that a solution $u$ of \eqref{star} is \emph{stable} if the second variation at $u$ of $ \mathcal E$ is nonnegative, that is
\begin{equation}
\int_{B_R} \big( (-\Delta)^{1/2}\xi +f'(u)\xi\big)\xi \,dx \ge 0 \quad \mbox{for all }\xi\in C^2_c(B_R).
\end{equation}
Also, we say that $u$ is stable in $\R^d$ if it is stable in $B_R$ for all $R\ge 1$.

An important ingredient in our proof consists in considering variations of
a stable solution $u$ via
a suitable smooth $1$-parameter family of ``translation like'' deformations. This kind of idea has been first used by Savin and Valdinoci in \cite{SV, SV-mon}, and then in \cite{CSV,CCS}.
More precisely, given $R\geq 3$, consider the cut-off functions
\[
\varphi^{0}(x):= 
\begin{cases}
1 &\mbox{for } |x|\le \frac 1 2 \\
2-2|x| \quad &\mbox{for } \frac 1 2 \le  |x|\le 1\\
0 &\mbox{for } |x|\ge 1,
\end{cases}
\]
\[
\varphi^{1}(x):= 
\begin{cases}
1 &\mbox{for } |x|\le \sqrt R \\
2-2\frac{\log|x|}{\log R} \quad &\mbox{for } \sqrt R \le  |x|\le R\\
0 &\mbox{for }  |x| \ge R,
\end{cases}
\]
\[
\varphi^{2}(x):= 
\begin{cases}
1 &\mbox{for } |x|\le R_*\\
\displaystyle
1- \frac{\log\log |x|}{\log\log(R)}&\mbox{for }  R_*\le |x|\le R\\
0 &\mbox{for } |x|\geq R,
\end{cases}
\]
where $R^* := \exp(\sqrt{\log R})$.

For a fixed unit vector $\boldsymbol v\in \mathbb S^{n-1}$ define 
\begin{equation}\label{perturb}
\Psi_{t,\boldsymbol v}^i(z):= z + t\varphi^i  (z)\boldsymbol v,\qquad t \in \R,\quad  z \in \R^d, \quad i=0,1,2.
\end{equation}
Then, given a function $v:\R^d\to \R$ and $t\in(-1,1)$ with $|t|$ small enough (so that $\Psi_{t,\boldsymbol v}^i$ is invertible), we define the operator
\begin{equation}\label{perturb2}
\mathcal P^i_{t,\boldsymbol v} v(x) := v\bigl((\Psi_{t,\boldsymbol v}^i) ^{-1}(x)\bigr).
\end{equation}
Also, we use $\mathcal E^{\rm Sob}$ and $\mathcal E^{\rm Pot}$ to denote respectively the fractional Sobolev term and the Potential term appearing in the definition of $\mathcal E$:
$$
\mathcal E^{\rm Sob} (u; B_R) := \iint_{\R^d\times\R^d\setminus B_R^c \times B_R^c} \frac{|u(x)-u(y)|^2}{|x-y|^{d+1}}\,dx\,dy,
$$
$$
\mathcal E^{\rm Pot} (u; B_R) := \int_{B_R} F(u(x)) \,dx.
$$
We shall use the following bounds:
\begin{lemma}\label{quadratic}There exists a dimensional constant $C$ such that the following hold
for all $v:\R^d\to \R$, $|t|$ small, and $\boldsymbol v \in \mathbb S^{d-1}$:

(1) We have
\begin{equation}\label{1}
\mathcal E(\mathcal P^0_{t,\boldsymbol v} v;  B_1) + \mathcal E(\mathcal P^0_{-t,\boldsymbol v} v;B_1) - 2\mathcal E(v; B_1) \le C   t^2 \mathcal E^{\rm Sob}(v; B_2).
\end{equation}

(2) For  $R=2^{2k}$, $k \geq 1$, we have
\begin{equation}\label{2}
\mathcal E(\mathcal P^1_{t,\boldsymbol v}v;  B_R) + \mathcal E(\mathcal P^1_{-t,\boldsymbol v} v;B_R) - 2\mathcal E(v; B_R) \le C   \frac{t^2}{k^2} \sum_{j=1}^{k} \frac{\mathcal E^{\rm Sob}(v; B_{2^{k+j}})}{2^{2(k+j)}}.
\end{equation}

(3) For $R\ge 4$, 
\begin{equation}\label{3}
\mathcal E(\mathcal P^2_{t,\boldsymbol v} v;  B_R) + \mathcal E(\mathcal P^2_{-t,\boldsymbol v}v;B_R) - 2\mathcal E(v; B_R) \le C\frac{t^2}{\log\log R} \,\sup_{\rho\ge 2} \frac{\mathcal E^{\rm Sob}(v;B_\rho)} {\rho^2\log \rho}.
\end{equation}
\end{lemma}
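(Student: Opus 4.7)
The plan is to move the perturbation $\Psi^i_{t,\boldsymbol v}=\mathrm{id}+t\varphi^i\boldsymbol v$ off the function $v$ and onto the kernel by a change of variables, then Taylor-expand in $t$ and exploit the $t\leftrightarrow -t$ symmetry to kill all odd-order contributions. For $|t|$ small, $\Psi^i_{t,\boldsymbol v}$ is a smooth diffeomorphism of $\R^d$ equal to the identity outside the support of $\varphi^i$ (hence outside $B_R$); by a continuity/degree argument it preserves $B_R$, $B_R^c$, and the integration region $\Omega_R:=\R^d\times\R^d\setminus B_R^c\times B_R^c$. A crucial algebraic observation is that $D\Psi^i_{t,\boldsymbol v}(z)=I+t\,\boldsymbol v\otimes\nabla\varphi^i(z)$ is a rank-one perturbation of the identity, so its determinant is \emph{exactly}
\[
J^i_t(z):=\det D\Psi^i_{t,\boldsymbol v}(z)=1+t\,\partial_{\boldsymbol v}\varphi^i(z).
\]
Substituting $z=(\Psi^i_{t,\boldsymbol v})^{-1}(x)$ gives $\mathcal E^{\rm Pot}(\mathcal P^i_{t,\boldsymbol v}v;B_R)=\int_{B_R}F(v(z))\,J^i_t(z)\,dz$, which is \emph{affine} in $t$; its symmetric second difference in $t$ therefore vanishes identically, and the potential part contributes nothing to the left-hand sides of (1)--(3).

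For the Sobolev part the same change of variables produces
\[
\mathcal E^{\rm Sob}(\mathcal P^i_{t,\boldsymbol v}v;B_R)=\iint_{\Omega_R}|v(z)-v(w)|^2\,K^i_t(z,w)\,dz\,dw,\qquad K^i_t(z,w):=\frac{J^i_t(z)J^i_t(w)}{|\Psi^i_t(z)-\Psi^i_t(w)|^{d+1}}.
\]
Writing $r=|z-w|$, $\theta=(z-w)\cdot\boldsymbol v/r$ and $\delta=\varphi^i(z)-\varphi^i(w)$, one has $|\Psi^i_t(z)-\Psi^i_t(w)|^{-(d+1)}=r^{-(d+1)}[1+2t\theta\delta/r+t^2\delta^2/r^2]^{-(d+1)/2}$, while $J^i_t(z)J^i_t(w)$ is a polynomial of degree two in $t$. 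A second-order Taylor expansion kills all odd powers in the symmetric sum $K^i_t+K^i_{-t}-2K^i_0$, and applying Young's inequality to the cross term produces the pointwise bound
\[
|K^i_t+K^i_{-t}-2K^i_0|(z,w)\le C\,t^2\!\left[\frac{(\varphi^i(z)-\varphi^i(w))^2}{|z-w|^{d+3}}+\frac{|\nabla\varphi^i(z)|^2+|\nabla\varphi^i(w)|^2}{|z-w|^{d+1}}\right],
\]
valid for $|t|$ below a dimensional constant. Using $|\varphi^i(z)-\varphi^i(w)|^2\le |z-w|^2\int_0^1|\nabla\varphi^i(z+s(w-z))|^2\,ds$ and Fubini, the first term is, up to a factor, subsumed by an averaged version of the second.

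It only remains to specialize the pointwise bound to each $\varphi^i$ and integrate against $|v(z)-v(w)|^2$. For $\varphi^0$, $|\nabla\varphi^0|\le C\mathbf 1_{\bar B_1}$, and the resulting integral is directly controlled by $\mathcal E^{\rm Sob}(v;B_2)$, yielding (1). For $\varphi^1$ with $R=2^{2k}$, the gradient bound $|\nabla\varphi^1(x)|\le C/(|x|\log R)$ on $B_R\setminus B_{\sqrt R}$ combined with the dyadic decomposition $B_R\setminus B_{\sqrt R}=\bigsqcup_{j=1}^{k}A_j$, $A_j:=B_{2^{k+j}}\setminus B_{2^{k+j-1}}$, and the elementary estimate $\int_{A_j}\int_{\R^d}|v(z)-v(w)|^2|z-w|^{-(d+1)}\,dw\,dz\le \mathcal E^{\rm Sob}(v;B_{2^{k+j}})$ produces the weighted sum in (2), where the factor $k^{-2}$ comes from $(\log R)^{-2}$ and the factor $2^{-2(k+j)}$ from $|x|^{-2}$ on $A_j$. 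An entirely analogous argument, using $|\nabla\varphi^2(x)|\le C/(|x|\log|x|\log\log R)$ on $B_R\setminus B_{R_*}$ and annuli geometrically spaced on the $\log|x|$ scale (so that the width of each annulus is absorbed into the $\rho^2\log\rho$ denominator), gives (3). The main technical point is controlling the contribution of ``long-range'' pairs $(z,w)$ in which one point is deep inside the support of $\varphi^i$ while the other is far outside: there $|\varphi^i(z)-\varphi^i(w)|$ cannot be bounded by $|z-w|$ times a single gradient value, and one must instead use the $L^\infty$ bound together with the extra decay $|z-w|^{-(d+3)}$ in the first term of the kernel estimate to absorb these pairs into the same annular sums.
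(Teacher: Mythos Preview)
Your overall architecture matches the paper's: change variables to push $\Psi^i_{t,\boldsymbol v}$ onto the kernel, observe that the Jacobian is exactly affine in $t$ so the potential term cancels, Taylor-expand the transformed kernel to second order, and use the $t\leftrightarrow-t$ symmetry to isolate a quadratic error controlled by $\varepsilon^i(y,\bar y):=\frac{\varphi^i(y)-\varphi^i(\bar y)}{|y-\bar y|}$ and $|\nabla\varphi^i|$. The gap is in how you bound the difference-quotient term. Your Jensen-plus-Fubini step leaves $|\nabla\varphi^i|^2$ evaluated at the convex combination $(1-s)z+sw$, and this point can sit much closer to the origin than either $z$ or $w$ (take $z,w$ on opposite sides), so the ``averaged second term'' is \emph{not} dominated by $|\nabla\varphi^i(z)|^2+|\nabla\varphi^i(w)|^2$; for $i=1,2$ this spoils the annular localization you need. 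Your fallback for long-range pairs --- use $\|\varphi^i\|_{L^\infty}\le 1$ and the extra $|z-w|^{-2}$ decay --- does not rescue the $i=2$ case: bounding $(\varphi^2(z)-\varphi^2(w))^2\le 1$ and $|z-w|^{-2}\lesssim |z|^{-2}$ and summing over dyadic shells from $R_*$ to $R$ gives $S\sum_m m\sim S(\log R)^2$, far larger than the target $S/\log\log R$. The paper avoids all of this by bounding $\varepsilon^i$ \emph{directly}: since each $\varphi^i$ is radial with $r\mapsto|(\varphi^i)'(r)|$ nonincreasing, the one-variable mean value theorem applied to $r\mapsto\varphi^i(re)$ gives $|\varepsilon^i(y,\bar y)|\le|\nabla\varphi^i|$ evaluated at radius $\min(|y|,|\bar y|)$ (or $R_*$, $\sqrt R$, as appropriate). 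This places the weight exactly where the annular decomposition needs it.

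A second, smaller difference concerns part (3). The paper does not use discrete annuli there; instead, writing $e(\rho):=\mathcal E^{\rm Sob}(v;B_\rho)$, the pointwise bound $|\varepsilon^2(y,\bar y)|\lesssim \bigl(\log\log R\cdot\rho\log\rho\bigr)^{-1}$ for $\min(|y|,|\bar y|)\ge\rho\ge R_*$ naturally produces $\frac{e(R_*)}{(R_*\log R_*)^2}+\int_{R_*}^R\frac{e'(\rho)}{(\rho\log\rho)^2}\,d\rho$, and a single integration by parts turns this into $\int_{R_*}^R\frac{e(\rho)}{\rho^3(\log\rho)^2}\,d\rho\le S\int_{R_*}^R\frac{d\rho}{\rho\log\rho}\lesssim S\log\log R$, which combined with the prefactor $(\log\log R)^{-2}$ gives (3). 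A discrete version with annuli \emph{dyadic in $|x|$} would reproduce this computation; but your phrase ``geometrically spaced on the $\log|x|$ scale'' reads as $\log\rho_{m+1}\sim c\log\rho_m$, for which $\rho_{m+1}^2/\rho_m^2$ is unbounded and the sum blows up.
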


\begin{proof}
The lemma follows as in \cite[Lemma 2.1]{CSV} and \cite[Lemma 2.3]{CCS}. However, since we do not have a precise reference for the estimates that we need, we give a sketch of  proof.
Note that, by approximation, it suffices to consider the case when $v\in C^2_c(\R^d)$.

First observe that, since $\boldsymbol v$ has unit norm, the Jacobian of the change of variables $z\mapsto \Psi_{t,\boldsymbol v}^i(z)$ is given 
by 
\begin{equation}\label{jac}
J_t^i(z):= |{\rm det}(D \Psi^i_{t,\boldsymbol v}(z))| = |{\rm det}({\rm Id} + t \nabla \varphi^i (z) \otimes \boldsymbol v)| = 1+t |\nabla \varphi^i(z) |. 
\end{equation}
Set
\begin{equation}
\label{eq:Ri}
R^i:=\left\{
\begin{array}{ll}
1&\text{if }i=0\\
R&\text{if }i=1,2.
\end{array}
\right.
\end{equation}
Then, performing the change of variables $x:=\Psi_{t,\boldsymbol v}^i(z)$, we get
\[
\mathcal E^{\rm Pot} (\mathcal P^i_{t,\boldsymbol v} v;  B_{R^i}) = \int_{B_{R^i}} 
 F\big(v\bigl((\Psi_{t,\boldsymbol v}^i) ^{-1}(x)\bigr)  \big)\,dx  =  \int_{B_{R^i}}   F(v(z)) \big(1+t |\nabla \varphi^i(z)| \big)\,dz,
\]
thus
\[
\mathcal E^{\rm Pot}(\mathcal P^i_{t,\boldsymbol v}v;  B_{R^i}) + \mathcal E^{\rm Pot}(\mathcal P^i_{-t,\boldsymbol v} v;B_{R^i}) - 2\mathcal E^{\rm Pot}(v; B_{R^i}) =0.
\]
Hence, we only need to estimate the second order incremental quotient of $\mathcal E^{\rm Sob}$.
To this aim, using the same change of variable and setting
\[A_{r}:=\R^n\times \R^n\setminus B_{r}^c\times B_{r}^c\qquad (r>0)\]
and $K(z) := |z|^{-(d+1)}$, we have (note that $\Psi_{t,\boldsymbol v}^i$ preserves $B_{R^i}$)
\begin{equation}\label{changeofvars}
\begin{split}
\mathcal E^{\rm Sob}(\mathcal P^i_{t,\boldsymbol v}v;  B_{R^i}) 
= \iint_{A_{R^i}} |v(y)-v(\bar y)|^2 K\bigl( \Psi^{i}_{t, \boldsymbol v}(y)- \Psi^{i}_{t, \boldsymbol v}(\bar y)\bigr) \,J^i_{t}(y) \,dy \,J^i_{ t}(\bar y)\,d\bar y\,.
 \end{split}
\end{equation}
Recalling that
$\Psi^{i}_{t, \boldsymbol v}(y)- \Psi^{i}_{t, \boldsymbol v}(\bar y) = y -\bar y + t \big( \varphi^i(y)-\varphi^i(\bar y) \big) \boldsymbol v$
and defining 
\[ \varepsilon^i(y,\bar y) := \frac{\varphi^i(y)-\varphi^i(\bar y)}{|y-\bar y|},\]
as in the proof of \cite[Lemma 2.1]{CSV} we have, for $|t|$ small,
\begin{equation}\label{blabla1}
\big| K\bigl(z\pm t\varepsilon|z| \boldsymbol v \bigr) - K(z)  \mp t\partial_{\boldsymbol v} K(z) \varepsilon |z|  \big| \le C t^2\varepsilon^2 K(z)
\end{equation}
and
\begin{equation}
\label{aboveest}
\begin{split}
|\varepsilon^i(y,\bar y)| +& \frac{|J_t^i(y)-1|}t +  \frac{|J_t^i(\bar y)-1|}{t} \le 
\\
&\le
\left\{
\begin{array}{cc}
C & \text{if } i=0,
\vspace{3mm}
\\
\displaystyle  \frac{C}{\log R\, \max\bigl\{\sqrt R,\min(|y|,|\bar y|)\bigr\}} &\text{if } i=1,
\vspace{3mm}
\\
\displaystyle  \frac{C}{\log \log R \, \log \rho \,\rho} \qquad\mbox {for } \rho\ge R_*,\, |y|\ge\rho,\, |\bar y|\ge \rho  \quad &\text{if } i=2.\\
\end{array}
\right.
\end{split}
\end{equation}
Then, using \eqref{changeofvars}, \eqref{jac}, \eqref{blabla1}, and \eqref{aboveest}, and decomposing $A_R=A_{2^{2k}}=A_{2^k}\cup\left(\cup_{j=1}^k A_{2^{k+j}}\setminus A_{2^{k+j-1}}\right)$ when $i=1,$ an easy computation yields 

\[
\begin{split}
\mathcal E^{\rm Sob}(\mathcal P^i_{t,\boldsymbol v}v;  B_{R^i}) &+ \mathcal E^{\rm Sob}(\mathcal P^i_{-t,\boldsymbol v} v;B_{R^i}) - 2\mathcal E^{\rm Sob}(v; B_{R^i})\le
 \\
& \le 
\begin{cases}
\displaystyle
Ct^2\iint_{A_1}   |v(y)-v(\bar y)|^2 K(y-\bar y)\, dy\,d\bar y   & \text{if } i=0,
\vspace{3mm}
\\
\displaystyle
C\frac{t^2}{k^2}\sum_{j=1}^{k} \iint _{A_{2^{k+j}}} \frac{1}{ 2^{2(k+j)}}   |v(y)-v(\bar y)|^2 K(y-\bar y)\, dy\,d\bar y \quad &\text{if }i=1
\end{cases}
\end{split}
\]
(see the proof of \cite[Lemma 2.1]{CSV} for more details). Therefore \eqref{1} and \eqref{2} follow.

The proof of \eqref{3} needs a more careful estimate. For $\rho>0$, let us denote 
\[
e(\rho) :=\mathcal  E^{\rm Sob}(v; B_{\rho}) = \iint_{A_\rho}  |v(y)-v(\bar y)|^2 K(y-\bar y) \,dy\, d\bar y
\]
Note that 
\[
e'(\rho) = \lim_{h\downarrow 0}  \frac{1}{h}  \iint_{A_{\rho+h} \setminus A_\rho}  |v(y)-v(\bar y)|^2 K(y-\bar y)\, dy\, d\bar y.
\]
Observing that in the complement  of $A_\rho$ we have $|y|\ge\rho$ and  $|\bar y|\ge \rho$ and using \eqref{changeofvars}, \eqref{jac}, \eqref{blabla1}, and \eqref{aboveest} we obtain 
\[
\begin{split}
\mathcal E^{\rm Sob}(\mathcal P^2_{t,\boldsymbol v}v;  B_{R}) + \mathcal E^{\rm Sob}(\mathcal P^2_{-t,\boldsymbol v} v;B_{R}) &- 2\mathcal E^{\rm Sob}(v; B_{R})\le
\\
&\le 
\frac{Ct^2}
{(\log \log R)^2}  \left( \frac{e(R_*)}{ (\log R_*\, R_*)^2 } + \int_{R_*}^R \frac{e'(\rho)}{(\log \rho \, \rho)^2} d\rho\right)
\\\
&\le \frac{ Ct^2}{(\log \log R)^2}  \left( S+ C \int_{R_*}^R \frac{e(\rho)}{(\log \rho)^2 \, \rho^3} d\rho\right)
\\
&\le \frac{ Ct^2}{(\log \log R)^2}  \left( S+  \int_{R_*}^R \frac{S}{\log \rho \, \rho} d\rho\right)
\\
&\le  \frac{ Ct^2}{(\log \log R)} S, 
\end{split}
\]
where
\[
S:=  \sup_{\rho \ge 2} \frac{e(\rho)} {\rho^2\log \rho} = \sup_{\rho \ge 2} \frac{\mathcal E^{\rm Sob}(v;B_\rho)} {\rho^2\log \rho} ,
\]
so \eqref{3} follows.
\end{proof}

%
%

The following is a basic BV estimate in $B_{1/2}$ for stable solutions in a ball.
\begin{lemma}\label{quadratic1}
Let $i\in \{0,1,2\}$, $R^i$ as in \eqref{eq:Ri}, and let $u\in C^{1,\alpha}(\overline{B_{R^i}})$ be a stable solution to $(-\Delta)^{1/2} u+f(u)=0$ in $B_{R^i}$
with $|u|\le 1$. Assume there exists $\eta>0$ such that, for $|t|$ small enough, we have
\begin{equation}\label{near2}
 \mathcal E(\mathcal P^i_{t,\boldsymbol v}  u; B_{R^i})+ \mathcal E(\mathcal P^i_{-t,\boldsymbol v} u; B_{R^i}) -  2\mathcal E(u; B_{R^i}) \le \eta t^2\qquad \forall\,\boldsymbol v\in \mathbb S^{d-1}.
\end{equation}
Then 
\begin{equation}\label{blabla}
\biggl(\int_{B_{1/2}}
\bigl(\partial_{\boldsymbol v} u(x)\bigr)_+\,dx\biggr)\biggl(\int_{B_{1/2}}
\bigl(\partial_{\boldsymbol v} u(y)\bigr)_-\,dy \biggr) \le 2\eta
\end{equation}
 and
\begin{equation}\label{eq:BV}
 \int_{B_{1/2}} |\nabla u| \le C(1+\sqrt \eta),  \end{equation}
for some dimensional constant $C$.
\end{lemma}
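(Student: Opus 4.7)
The plan is to compare $v_\pm:=\mathcal P^i_{\pm t,\boldsymbol v}u$ with their pointwise maximum $V:=\max(v_+,v_-)$ and minimum $W:=\min(v_+,v_-)$. Since $\{V(x),W(x)\}=\{v_+(x),v_-(x)\}$ at every point, the potential terms cancel ($F(V)+F(W)=F(v_+)+F(v_-)$ pointwise), and a case-by-case inspection of the fractional Dirichlet-form integrand at a pair $(x,y)$ yields the rearrangement identity
\[
\mathcal E(v_+)+\mathcal E(v_-)-\mathcal E(V)-\mathcal E(W) = 2\iint \frac{(v_+-v_-)_+(x)\,(v_+-v_-)_-(y)}{|x-y|^{d+1}}\,dx\,dy.
\]

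On the other hand, since $u$ is a stable critical point of $\mathcal E(\,\cdot\,;B_{R^i})$ and $V-u$, $W-u$ are compactly supported perturbations of $L^\infty$-size $O(t)$, a Taylor expansion at $u$ combined with the nonnegativity of the second variation gives $\mathcal E(V)+\mathcal E(W)\ge 2\mathcal E(u)+o(t^2)$, while the hypothesis \eqref{near2} gives $\mathcal E(v_+)+\mathcal E(v_-)\le 2\mathcal E(u)+\eta t^2$. Subtracting and using the rearrangement identity,
\[
\iint \frac{(v_+-v_-)_+(x)(v_+-v_-)_-(y)}{|x-y|^{d+1}}\,dx\,dy \le \tfrac{1}{2}\eta t^2+o(t^2).
\]
Since $v_+-v_-=-2t\,\varphi^i\,\partial_{\boldsymbol v}u+O(t^2)$ implies $(v_+-v_-)_\pm=2t(\varphi^i\partial_{\boldsymbol v}u)_\mp+O(t^2)$, dividing by $t^2$ and letting $t\to 0^+$ yields
\[
\iint \frac{(\varphi^i\partial_{\boldsymbol v}u)_+(x)\,(\varphi^i\partial_{\boldsymbol v}u)_-(y)}{|x-y|^{d+1}}\,dx\,dy \le \tfrac{\eta}{8}.
\]
As $\varphi^i\equiv 1$ on $B_{1/2}$ and $|x-y|^{-(d+1)}\ge 1$ for $x,y\in B_{1/2}$, restricting the integration to $B_{1/2}\times B_{1/2}$ gives $\bigl(\int_{B_{1/2}}(\partial_{\boldsymbol v}u)_+\bigr)\bigl(\int_{B_{1/2}}(\partial_{\boldsymbol v}u)_-\bigr)\le \eta/8$, which is stronger than \eqref{blabla}.

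For \eqref{eq:BV}, set $a:=\int_{B_{1/2}}(\partial_{\boldsymbol v}u)_+$ and $b:=\int_{B_{1/2}}(\partial_{\boldsymbol v}u)_-$. The divergence theorem and $|u|\le 1$ yield $|a-b|=\bigl|\int_{\partial B_{1/2}}u\,\boldsymbol v\!\cdot\!\nu\,d\sigma\bigr|\le C$, while \eqref{blabla} gives $ab\le 2\eta$. Assuming WLOG $b\le a$, then $b\le\sqrt{ab}\le\sqrt{2\eta}$ and $a\le b+C\le\sqrt{2\eta}+C$, so $a+b\le C(1+\sqrt\eta)$; applying this in coordinate directions $\boldsymbol v=e_1,\ldots,e_d$ and summing yields \eqref{eq:BV}. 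The main obstacle in this plan is justifying $\mathcal E(V)+\mathcal E(W)\ge 2\mathcal E(u)+o(t^2)$ despite $V,W$ being only Lipschitz: since the naive pointwise remainder control for $F(u+h)-F(u)-f(u)h$ is only $O(|h|^{1+\alpha})=O(t^{1+\alpha})$, one must exploit the full second variation form (using the regularity of $f$ beyond $C^{0,\alpha}$, implicit in the definition of stability) to upgrade to an $o(t^2)$-remainder.
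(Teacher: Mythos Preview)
Your proof is correct and follows essentially the same route as the paper's: the paper compares $u$ with $\mathcal P^i_{t,\boldsymbol v}u$ (rather than $\mathcal P^i_{t,\boldsymbol v}u$ with $\mathcal P^i_{-t,\boldsymbol v}u$ as you do), but otherwise uses the identical max/min rearrangement identity together with stability to bound the cross term, and then the same divergence-theorem argument for \eqref{eq:BV}. The regularity caveat you flag at the end---that the $o(t^2)$ lower bound from stability implicitly needs $f\in C^1$, which is built into the very definition of stability through $f'(u)$---is equally present in the paper's argument and is simply glossed over there.
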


\begin{proof}
The proof is similar to the ones of  \cite[Lemmas 2.4 and 2.5]{CSV} or \cite[Lemma 2.5 and 2.6]{CCS}.
The key point is to note that, since $u$ is stable,
$$
\mathcal E(\mathcal P^i_{t,\boldsymbol v}  u; B_{R^i}) - \mathcal E(u; B_{R^i}) \geq -o(t^2),
$$
hence
\eqref{near2}
implies 
 \[
 \mathcal E(\mathcal P^i_{t,\boldsymbol v}  u; B_{R^i}) - \mathcal E(u; B_{R^i}) \le 2\eta  t^2
\]
for $|t|$ small enough.
 
On the other hand,   still by stability, the two functions 
\[
\overline u := \max\{u, P_{t,\boldsymbol v} ^0u\}\quad \mbox{and}\quad  \underline u := \min\{u, P_{t,\boldsymbol v} ^0u\}
\] 
satisfy 
\[
 \mathcal E(\overline u; B_{R^i}) - \mathcal E(u; B_{R^i})  \ge -o(t^2),\qquad 
  \mathcal E(\underline u; B_{R^i}) - \mathcal E(u; B_{R^i})  \ge -o(t^2).
\]
Hence, combining these inequalities with the identity 
\[\begin{split}
 \mathcal E(\overline u; B_{R^i})  + \mathcal E(\underline u; B_{R^i})  &+ 2\iint_{B_{R^i}\times B_{R^i}} \frac{ (\mathcal P^i_{t,\boldsymbol v}  u-u)_+(x)(\mathcal P^i_{t,\boldsymbol v}  u-u)_-(y)}{|x-y|^{d+1}}\,dx\,dy  \le 
 \\
  & \hspace{20mm}\le  \mathcal E(\mathcal P^i_{t,\boldsymbol v}  u; B_{R^i}) + \mathcal E(u; B_{R^i}) 
\end{split}
\]
we obtain
\[
2\iint_{B_{R^i}\times B_{R^i}  }\frac{ (\mathcal P^i_{t,\boldsymbol v}  u-u)_+(x)(\mathcal P^i_{t,\boldsymbol v}  u-u)_-(y)}{|x-y|^{d+1}}\,dx\,dy   \le 4\eta t^2
\]
Noticing that $\mathcal P^i_{t,\boldsymbol v} u(x) = u(x-t\boldsymbol v)$ for $x\in B_{1/2}$ and that $|x-y|^{-d-1} \ge 1$ for $x,y\in B_{1/2}$ we obtain the bound
$$
\iint_{B_{1/2}\times B_{1/2} } \left(\frac{u(x-t\boldsymbol v)-u(x)}{t}\right)_+\left(\frac{u(y-t\boldsymbol v)-u(y)}{t}\right)_- \,dx\,dy \leq 2\eta
$$
for all $|t|$ small enough, so \eqref{blabla} follows by letting $t \to 0$.

In other words, if we define 
$$
A_{\boldsymbol v}^\pm:=\int_{B_{1/2}}
\bigl(\partial_{\boldsymbol v} u(x)\bigr)_\pm\,dx,
$$
we have proved that $\min\{A_{\boldsymbol v}^+,A_{\boldsymbol v}^-\}^2\leq A_{\boldsymbol v}^+A_{\boldsymbol v}^- \leq 2\eta$.
In addition, since $|u|\leq 1$, by the divergence theorem
$$
|A_{\boldsymbol v}^+-A_{\boldsymbol v}^-|
=\biggl|\int_{B_{1/2}}\partial_{\boldsymbol v}u(x)\,dx\biggr| \leq \int_{\partial B_{1/2}}|\boldsymbol v\cdot \nu_{\partial B_{1/2}}|\leq C.
$$
Combining these bounds, this proves that
$$
\int_{B_{1/2}}|\partial_{\boldsymbol v}u(x)|\,dx=A_{\boldsymbol v}^++A_{\boldsymbol v}^-= |A_{\boldsymbol v}^+-A_{\boldsymbol v}^-|+2\min\{A_{\boldsymbol v}^+,A_{\boldsymbol v}^-\} \leq C+2\sqrt{2\eta},
$$
from which \eqref{eq:BV} follows immediately.
\end{proof}

We now recall the following general lemma due to Simon \cite{Simon} (see also  \cite[Lemma 3.1]{CSV}):
\begin{lemma}\label{lem_abstract}
Let $\beta\in \R$ and $C_0>0$. Let $\mathcal S: \mathcal B \rightarrow [0,+\infty]$ be a nonnegative function defined on the class $\mathcal B$  of open balls $B\subset \R^n$ and satisfying the following subadditivity property:
\[  B \subset \bigcup_{j=1}^N B_j \quad \Longrightarrow\quad \mathcal S(B)\le \sum_{j=1}^N \mathcal S(B_j). \]
Also, assume that $\mathcal S(B_1)< \infty.$
Then there exists $\delta = \delta(n,\beta)>0$ such that if
\begin{equation*}\label{hp-lem}
 \rho^\beta \mathcal S\bigl(B_{\rho/4(z)}\bigr) \le \delta \rho^\beta \mathcal S\bigl(B_\rho(z)\bigr)+ C_0\quad \mbox{whenever }B_\rho(z)\subset B_1
 \end{equation*}
then
\[ \mathcal S(B_{1/2}) \le CC_0,\]
where $C$ depends only on $d$ and $\beta$.
\end{lemma}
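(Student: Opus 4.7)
The plan is to prove the lemma by the classical Simon iteration scheme: reformulate the hypothesis in scale-invariant form, introduce an appropriate supremum $\Phi$, close it up by a one-step absorption, and finally cover $B_{1/2}$.

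First I would observe that the subadditivity hypothesis with $N=1$ forces monotonicity of $\mathcal S$: $B\subset B'$ yields $\mathcal S(B)\le \mathcal S(B')$. In particular every ball $B_r(z)\subset B_1$ satisfies $\mathcal S(B_r(z))\le \mathcal S(B_1)<\infty$, so the scale-invariant quantity $q(B_r(z)):=r^\beta \mathcal S(B_r(z))$ is bounded on any admissible class of balls with radii bounded below.

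Next I would rewrite the hypothesis in scale-invariant form by multiplying both sides by $4^{-\beta}$, obtaining
\[
q\bigl(B_{\rho/4}(z)\bigr)\;\le\;\bigl(\delta\,4^{-\beta}\bigr)\,q\bigl(B_\rho(z)\bigr)\;+\;4^{-\beta}C_0
\qquad\text{whenever } B_\rho(z)\subset B_1.
\]
Setting $\theta:=\delta\cdot 4^{-\beta}$ and choosing $\delta=\delta(n,\beta)$ so small that $\theta<1/2$, I would consider
\[
\Phi\;:=\;\sup\bigl\{q(B_r(z)) \,:\, z\in B_{3/4},\ 0<r\le 1/4\bigr\},
\]
which is finite by the monotonicity step. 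For every admissible pair with $r\le 1/16$, the parent ball $B_{4r}(z)$ lies inside $B_1$ (since $|z|+4r\le 3/4+1/4=1$), so the hypothesis applies and yields $q(B_r(z))\le\theta\,\Phi+4^{-\beta}C_0$. For the remaining admissible radii $r\in(1/16,1/4]$ I would use subadditivity to cover $B_r(z)$ by a dimension-dependent number of balls of radius $r/8$ with centers still in $B_{3/4}$, giving a control $q(B_r(z))\le C'(n,\beta)\,\Phi$ coming from the fixed ratio of radii. Taking the supremum of both estimates and absorbing $\Phi$ from the right-hand side (possible thanks to $\theta<1/2$) produces $\Phi\le C_1(n,\beta)\,C_0$.

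Finally I would cover $B_{1/2}$ by a dimension-dependent number $N(n)$ of balls $B_{1/4}(z_i)$ with centers $z_i\in B_{1/2}\subset B_{3/4}$, each belonging to the admissible class defining $\Phi$. Subadditivity then gives
\[
\mathcal S(B_{1/2})\;\le\;\sum_{i=1}^{N(n)}\mathcal S(B_{1/4}(z_i))\;\le\;N(n)\cdot 4^{\beta}\,\Phi\;\le\;C(n,\beta)\,C_0,
\]
which is the desired conclusion.

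The main obstacle is the set-up of the admissible class for $\Phi$: it must be chosen so that the parent $B_{4r}(z)$ both (i) lies inside $B_1$ so that the hypothesis can be invoked, and (ii) is still controlled by $\Phi$ so that absorption closes up. Restricting centers to $B_{3/4}$ and radii to $(0,1/4]$ achieves (i) for small radii, while a one-shot covering step handles the top of the radius range in (ii). The dependence of $\delta$ on $\beta$ enters through the rescaling factor $4^{-\beta}$, and the dependence on $n$ enters through the covering multiplicities in the large-radius case.
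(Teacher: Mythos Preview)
The paper does not prove this lemma; it merely records the statement and cites Simon~\cite{Simon} and \cite[Lemma~3.1]{CSV}. So the only question is whether your argument stands on its own.

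There are two genuine gaps. First, the absorption does not close. In the range $r\in(1/16,1/4]$ you obtain only $q(B_r(z))\le C'(n,\beta)\,\Phi$, so taking the supremum over both ranges yields
\[
\Phi\le\max\bigl\{\theta\Phi+4^{-\beta}C_0,\ C'\Phi\bigr\};
\]
since $C'\ge1$ in general, the second alternative is vacuous and you cannot conclude $\Phi\le C_1C_0$. The smallness of $\theta$ is irrelevant here because $\theta$ does not appear in the large-radius bound. A fix would be to feed the Case~1 estimate into the covering balls of radius $r/8\le1/16$, obtaining $q(B_r(z))\le C'\theta\,\Phi+C'4^{-\beta}C_0$, and then to choose $\delta$ so small that $C'\theta<1$ (this, not the bare condition $\theta<1/2$, is where the dimensional dependence of $\delta$ actually enters). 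But even that fix requires the covering centers to lie in the admissible region $B_{3/4}$, and your claim that they can be so chosen fails when $|z|$ is near $3/4$ and $r$ is near $1/4$: then $B_r(z)$ reaches out toward $\partial B_1$ and cannot be covered by balls of radius $r/8$ centered in $B_{3/4}$. The standard remedy is to let the admissible center-region depend on $r$ (a distance-to-boundary weighting), which is exactly the set-up issue you flag at the end but do not resolve.

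Second, the finiteness of $\Phi$ is not established. You correctly note that $q$ is bounded on any class of balls \emph{with radii bounded below}, but then define $\Phi$ over $r\in(0,1/4]$ and assert it is finite ``by the monotonicity step''. For $\beta<0$ --- and the paper applies the lemma precisely with $\beta=1-d<0$ --- one has $r^\beta\to\infty$ as $r\downarrow0$, and monotonicity of $\mathcal S$ alone does not prevent $\Phi=\infty$. Without $\Phi<\infty$, the absorption inequality is vacuous.
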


Finally, we state an optimal bound on the $H^{1/2}$ norm of the mollification of a bounded function with the standard heat kernel, in terms of the $BV$ norm and the parameter of mollification (see \cite[Lemma 2.1]{FJ} for a proof): 

\begin{lemma} \label{lemfig}
Let $H_{d,t}(x):= (4\pi t)^{-d/2} e^{-|x|^2/4t}$ denote the heat kernel in $\R^d$. 
Given $u\in BV(\R^d)$ with $|u|\le 1$, set $u_\varepsilon := u\ast H_{d,\varepsilon^2}$. 
Then, for $\varepsilon \in(0,1/2)$, we have
\[
 [ u_\varepsilon ]_{H^{1/2}(\R^d)}^2 := \iint_{\R^d\times\R^d} \frac{|u_\varepsilon(x)-u_\varepsilon(y)|^2}{|x-y|^{d+1}}\,dx\,dy \le C \log{\frac 1 \varepsilon}\, \|u\|_{BV(\R^d)},
\]
where $C$ is a dimensional constant.
\end{lemma}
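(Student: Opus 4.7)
The plan is to reduce the $H^{1/2}$ seminorm to an integral over translations of $u_\varepsilon$, and then interpolate the $L^2$ translation modulus between a pointwise Lipschitz bound (from heat-kernel smoothing) and an $L^1$ translation bound (from the $BV$ assumption on $u$). By Fubini,
\[
[u_\varepsilon]_{H^{1/2}(\R^d)}^2 = \int_{\R^d} \frac{dh}{|h|^{d+1}}\int_{\R^d}|u_\varepsilon(x+h)-u_\varepsilon(x)|^2\,dx.
\]
Since $|u_\varepsilon|\le 1$, the inner integral satisfies $\int|u_\varepsilon(x+h)-u_\varepsilon(x)|^2\,dx \le \|u_\varepsilon(\cdot+h)-u_\varepsilon\|_{L^\infty}\cdot\|u_\varepsilon(\cdot+h)-u_\varepsilon\|_{L^1}$, so the task reduces to bounding these two moduli.

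For the $L^\infty$ bound I would use the trivial estimate $\|u_\varepsilon(\cdot+h)-u_\varepsilon\|_{L^\infty}\le 2$ together with the Lipschitz estimate arising from $\nabla u_\varepsilon = u\ast\nabla H_{d,\varepsilon^2}$ and $\|\nabla H_{d,\varepsilon^2}\|_{L^1}\le C/\varepsilon$, which gives $\|\nabla u_\varepsilon\|_{L^\infty}\le C/\varepsilon$, hence $\|u_\varepsilon(\cdot+h)-u_\varepsilon\|_{L^\infty}\le C\min(1,|h|/\varepsilon)$. For the $L^1$ bound I would write $u_\varepsilon(\cdot+h)-u_\varepsilon = H_{d,\varepsilon^2}\ast(u(\cdot+h)-u)$ and apply Young's inequality together with the standard $BV$ translation bound $\|u(\cdot+h)-u\|_{L^1}\le|h|\,|Du|(\R^d)$, as well as the trivial $\|u(\cdot+h)-u\|_{L^1}\le 2\|u\|_{L^1}$, yielding $\|u_\varepsilon(\cdot+h)-u_\varepsilon\|_{L^1}\le C\min(|h|,1)\,\|u\|_{BV}$. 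Multiplying the two gives
\[
\int|u_\varepsilon(x+h)-u_\varepsilon(x)|^2\,dx \le C\,\min\!\Big(1,\tfrac{|h|}{\varepsilon}\Big)\,\min(|h|,1)\,\|u\|_{BV}.
\]

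Finally I would substitute this into the Fubini identity and split the $h$-integral into the three natural regions $\{|h|<\varepsilon\}$, $\{\varepsilon<|h|<1\}$, $\{|h|>1\}$. In the innermost region the integrand is bounded by $\|u\|_{BV}/(\varepsilon|h|^{d-1})$, whose integral is $O(\|u\|_{BV})$; in the middle region the integrand is $\|u\|_{BV}/|h|^d$, contributing exactly the logarithmic factor $C\|u\|_{BV}\log(1/\varepsilon)$; in the outermost region the integrand is $\|u\|_{BV}/|h|^{d+1}$, contributing $O(\|u\|_{BV})$. Summing these three contributions yields the claimed bound. The main point of care is the large-$|h|$ regime, where the $L^1$ translation modulus must be bounded by $2\|u\|_{L^1}$ rather than by $|h|\,|Du|(\R^d)$; this is where the $\|u\|_{L^1}$-part of the $BV$ norm enters, consistent with the standard convention $\|u\|_{BV}=\|u\|_{L^1}+|Du|(\R^d)$, and it is what prevents a possible divergence of the tail integral.
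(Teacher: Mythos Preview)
The paper does not give its own proof of this lemma; it simply refers the reader to \cite[Lemma~2.1]{FJ}. Your argument is correct and self-contained: the Fubini reduction to an integral in $h$, the interpolation $\|f\|_{L^2}^2\le\|f\|_{L^\infty}\|f\|_{L^1}$ applied to $f=u_\varepsilon(\cdot+h)-u_\varepsilon$, the bounds $\|\nabla u_\varepsilon\|_{L^\infty}\le C/\varepsilon$ and $\|u(\cdot+h)-u\|_{L^1}\le|h|\,|Du|(\R^d)$, and the three-region split in $|h|$ all go through and produce exactly $C\log(1/\varepsilon)\,\|u\|_{BV}$. The point you single out---that for $|h|>1$ one must cap the $L^1$ translation modulus by $2\|u\|_{L^1}$ rather than $|h|\,|Du|(\R^d)$---is indeed where the $L^1$ part of the $BV$ norm enters, and your treatment of it is correct.
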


\section{Proof of Proposition \ref{prop}}
\label{sect:prop}

As a preliminary result we need the following (sharp) interpolation estimate.
\begin{lemma}\label{newlemma}
Let $u:\R^d \to \R$ be a bounded function, with $|u|\leq 1$.
Assume that $u$ is Lipschitz in $B_2$, with $\|\nabla u\|_{L^\infty(B_2)}\le  L_o$ for some $L_o
\geq 2$.
Then
\[
\mathcal E^{\rm Sob} (u; B_1) \le C \log L_o\left( 1+ \int_{B_2} |\nabla u|\,dx\right)
\]
where $C$ depends only on $d$.
\end{lemma}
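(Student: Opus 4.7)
The plan is to split the double integral defining $\mathcal E^{\rm Sob}(u;B_1)$ according to (i) the location of the second point (inside or outside $B_2$) and (ii) the scale $|x-y|$ relative to the critical length $1/L_o$; on each piece one bounds the integrand asymmetrically. By symmetry it is enough to estimate $\int_{B_1}\int_{\R^d} |u(x)-u(y)|^2\,|x-y|^{-(d+1)}\,dy\,dx$, and the contribution from $y\in B_2^c$ is harmless because there $|x-y|\geq 1$ and $|u(x)-u(y)|\leq 2$, yielding a dimensional constant.

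The heart of the argument is the part with $y\in B_2$. The basic tool is the segment identity
\[
|u(x)-u(y)|\leq |x-y|\int_0^1 |\nabla u|\bigl(x+t(y-x)\bigr)\,dt,
\]
valid because $B_2$ is convex. The idea is to use this bound on only \emph{one} of the two factors of $|u(x)-u(y)|$ appearing in $|u(x)-u(y)|^2$, and bound the other factor by $L_o|x-y|$ (on small scales) or by $2$ (on intermediate scales). On the small-scale regime $|x-y|\leq 1/L_o$ the Lipschitz bound yields an integrand of the form $L_o|x-y|^{-(d-1)}\int_0^1 |\nabla u|(x+t(y-x))\,dt$; after the change of variables $z=y-x$, Fubini in $(t,z,x)$ and the elementary bound $\int_{B_1}|\nabla u|(x+tz)\,dx\leq \|\nabla u\|_{L^1(B_2)}$ (valid whenever $|tz|\leq 1$) reduce the estimate to $L_o \int_{|z|\leq 1/L_o}|z|^{1-d}\,dz\leq C$, producing a contribution $C\|\nabla u\|_{L^1(B_2)}$ with no logarithmic loss. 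On the intermediate-scale regime $1/L_o\leq |x-y|\leq 1$ one instead uses $|u(x)-u(y)|\leq 2$ for the second factor, producing the kernel $|x-y|^{-d}$; the same Fubini argument now meets $\int_{1/L_o\leq |z|\leq 1}|z|^{-d}\,dz$, which is exactly $\omega_{d-1}\log L_o$, so this piece yields $C\log L_o\,\|\nabla u\|_{L^1(B_2)}$. The remaining range $|x-y|\geq 1$, $y\in B_2$, is handled by the trivial bound $|u(x)-u(y)|\leq 2$ and the integrability of $|x-y|^{-(d+1)}$ there.

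No single step presents a serious obstacle; the main bookkeeping items are (a) the convexity of $B_2$, which ensures that the segment used in the gradient identity stays where $|\nabla u|$ is controlled, and (b) that $x+tz\in B_2$ whenever $x\in B_1$ and $|z|\leq 1$, so the Fubini step in the small/intermediate regimes is justified. Summing the four contributions yields
\[
\mathcal E^{\rm Sob}(u;B_1)\leq C\bigl(1+\|\nabla u\|_{L^1(B_2)}\bigr)+C\log L_o\,\|\nabla u\|_{L^1(B_2)}\leq C\log L_o\bigl(1+\|\nabla u\|_{L^1(B_2)}\bigr),
\]
which is the sought estimate. The logarithm is sharp, as one can verify on a mollification of a one-dimensional step function at scale $1/L_o$.
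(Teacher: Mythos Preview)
Your argument is correct and is genuinely different from the paper's proof. The paper proceeds by localizing via a cutoff $\tilde u=\eta u$, then applying Lemma~\ref{lemfig} (an interpolation bound $[\tilde u_\varepsilon]_{H^{1/2}}^2\le C\log(1/\varepsilon)\,\|\tilde u\|_{BV}$ for the heat-kernel regularization $\tilde u_\varepsilon$, quoted from \cite{FJ}), together with the Fourier-side estimate $[\tilde u_\varepsilon-\tilde u]_{H^{1/2}}^2\le C\varepsilon\|\nabla\tilde u\|_{L^2}^2\le C\varepsilon L_o\|\nabla\tilde u\|_{L^1}$; choosing $\varepsilon=1/L_o$ balances the two pieces and gives the result. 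Your route avoids mollification, Fourier analysis, and the external lemma entirely: the scale decomposition at $|x-y|=1/L_o$ plays exactly the role of the choice $\varepsilon=1/L_o$, and the asymmetric bound (one factor by the segment identity, the other by either $L_o|x-y|$ or $2$) replaces the splitting $\tilde u=(\tilde u-\tilde u_\varepsilon)+\tilde u_\varepsilon$. The advantage of your approach is that it is fully self-contained and elementary; the paper's approach has the merit of isolating the general $BV$-to-$H^{1/2}$ interpolation as a reusable lemma, at the cost of importing it from elsewhere.
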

\begin{proof}
Let $\eta\in C^\infty_c(B_2)$, $0 \leq \eta \leq 1$, be a radial cutoff function
such that $\eta=1$ in $B_{3/2}$ and $\|\nabla \eta\|_{L^\infty(\R^d)}\leq 3$, and  set $\tilde u := \eta u$. 
Observe that, since $|u|\leq 1$, $0 \leq \eta \leq 1$, $\|\nabla u\|_{L^\infty(B_2)}\le  L_o$, and $\eta$ is supported inside $B_2$,
we have (recall that $L_0 \geq 2$)
\begin{equation}\label{Lip}
\|\nabla \tilde u\|_{L^\infty(\R^d)}\leq  \|\nabla u\|_{L^\infty(B_2)}
+\|\nabla \eta\|_{L^\infty(\R^d)} \leq L_0+3 \leq 3L_0.
\end{equation}
Now,  since $|u|\le 1$, we have
\begin{equation}\label{difwithtilde}
\mathcal E^{\rm Sob} (u; B_1) \le  \mathcal E^{\rm Sob} (\tilde u; B_1) + C
\end{equation}
where $C$ depends only on $d$.
On the other hand, it follows by Lemma \ref{lemfig} that
\begin{equation}\label{molli1}
 \| \tilde u_\varepsilon \|_{H^{1/2}(\R^d)}^2 \le C \log{\frac 1 \varepsilon}\, \|\tilde u\|_{BV(\R^d)}.
\end{equation}
We also observe that, because of \eqref{Lip},\footnote{The first inequality in \eqref{molli2} can be proven using Fourier transform,
noticing that
\[\widehat{(\tilde u_\epsilon-u)}(\xi) = (e^{-\varepsilon^2|\xi|^2}-1)\hat u(\xi),\]
and that $\frac{|e^{-\varepsilon^2|\xi|^2}-1|^2}{\varepsilon |\xi|}$ is universally bounded. Indeed,
\begin{multline*}
[\tilde u_\varepsilon -\tilde u]_{H^{1/2}(\R^d)}^2=\int |\xi|\,\bigl|\widehat{(\tilde u_\epsilon-u)}(\xi)\bigr|^2\,d\xi= \int |\xi|\,\bigl|e^{-\varepsilon^2|\xi|^2}-1\bigr|^2|\hat u(\xi)|^2\,d\xi\\
\leq C\varepsilon\int |\xi|^2|\hat u(\xi)|^2\,d\xi=C \varepsilon \|\nabla \tilde u\|_{L^2(\R^d)}^2.
\end{multline*}
}
\begin{equation}\label{molli2}
[ \tilde u_\varepsilon -\tilde u]_{H^{1/2}(\R^d)}^2  \le C \varepsilon \|\nabla \tilde u\|_{L^2(\R^d)}^2    \le C \varepsilon L_{o}\int_{\R^d} |\nabla \tilde u|\,dx. 
\end{equation}
Therefore, choosing  $\varepsilon=(L_o)^{-1}$ in \eqref{molli1} and \eqref{molli2}, and using a triangle inequality, we get (recall that $L_0\geq 2$)
\[
[\tilde u ]_{H^{1/2}(\R^d)}^2 \le 2 [ \tilde u_\varepsilon ]_{H^{1/2}(\R^d)}^2 +2 [ \tilde u_\varepsilon -\tilde u]_{H^{1/2}(\R^d)}^2
 \le C \log{L_o} \int_{\R^d} |\nabla \tilde u|\,dx .
\]
Finally, we note that 
\[
\mathcal E^{\rm Sob} (\tilde u; B_1) \le \mathcal E^{\rm Sob} (\tilde u; B_2) =  [\tilde u]_{H^{1/2}(\R^d)}^2
\]
and that (cp. \eqref{Lip})
\[
\int_{\R^d} |\nabla \tilde u|\,dx  \le C + \int_{B_2} |\nabla u|\,dx.
\]
Hence, recalling \eqref{difwithtilde}, we obtain
\[
\mathcal E^{\rm Sob} (u; B_1)  \le  C+ C\log{L_o} \left( 1+ \int_{B_2} |\nabla  u|\,dx\right),
\]
 and the lemma follows. 
\end{proof}

We can now  prove Proposition \ref{prop}.
\begin{proof}[Proof of Proposition \ref{prop}]
This proof is  similar to the proof of Theorem 2.1 in \cite{CCS} (see also the proof Theorem 1.7 in \cite{CSV}). Here we need to use, as a new ingredient,  the estimate from Lemma \ref{newlemma}.
Throughout the proof, $C$ denotes a generic dimensional constant.\\

\noindent
- {\em Step 1}.
Let $v$ be a solution a stable solution  $(-\Delta)^{1/2} v + g(v)=0$ in $B_3$ satisfying $|v|\le 1$ in all of $\R^d$.

First,  using \eqref{1} in Lemma \ref{quadratic} and  then Lemma \ref{quadratic1} with $i=0$ and $R^i=1$, we obtain 
\begin{equation}\label{111}
\int_{B_{1/2}} |\nabla v| \,dx\le C\left(1 +\sqrt{ \mathcal E^{\rm Sob}(v; B_1)} \right). 
\end{equation}
Note that this estimate is valid for every stable solution $v$, independently of the nonlinearity $g$.

On the other hand, note that if $\|g\|_{C^{0,\alpha}([-1,1])}\le M_o$ for some $M_o\ge 2$, then by the interior regularity estimates for $(-\Delta)^{1/2}$ we have
\begin{equation}\label{eq:Lip v}
\|\nabla v\|_{L^\infty(B_2)} \le L_o := C M_o.
\end{equation}
Therefore, combining \eqref{111} with Lemma \ref{newlemma}, we obtain 
\begin{equation}\label{111bis}
\begin{split}
\int_{B_{1/2}}  |\nabla v|\,dx &\le C\left(1 + \sqrt{ C\log M_0 \left(1+ \int_{B_2} |\nabla v| \,dx\right)}\right)
\\
& \le \frac{C \log M_0 }{\delta} + \delta \int_{B_{2}}  |\nabla v|\,dx\qquad \forall\,\delta\in (0,1), 
\end{split}
\end{equation}
where we used the inequality $2\sqrt {ab}\le \delta a +b/\delta$ for $a,b\in \R_+$.\\

\noindent
- {\em Step 2}. For $v$ as in Step 1 and $B_{\rho}(z)\subset B_1$  we note that the function 
\[\tilde v (x) := v(z+2\rho x)\]
satisfies $(-\Delta)^{1/2} \tilde v + \tilde g(\tilde v) =0$ with $\tilde g(s)  := 2\rho g(s)$.
In particular  $\|\tilde g\|_{C^{0,\alpha}([-1,1])}\le 2\rho M_o \le 2M_o$,
so estimate \eqref{111bis} applied to $\tilde v$ yields
\[
\begin{split}
\int_{B_{1/2}}  |\nabla \tilde v|\,dx \le \frac{C \log (2M_0) }{\delta} + \delta \int_{B_{2}}  |\nabla \tilde v|\,dx,
\end{split}
\]
or equivalently
 \begin{equation}\label{111bis2}
\begin{split}
\rho^{1-d} \int_{B_{\rho/4}(z)}  |\nabla  v|\,dx \le \frac{C \log (2M_0) }{\delta} + \delta \rho^{1-d}  \int_{B_{\rho}(z)}  |\nabla v|\,dx. 
\end{split}
\end{equation}
Hence taking $\delta$ small enough and using Lemma \ref{lem_abstract} with $\mathcal S(B):= \int_B |\nabla v|\,dx$ and $\beta:=1-d$, we obtain
\begin{equation}\label{step2}
\int_{B_{1}} |\nabla  v|\,dx \le C \log M_0 ,
\end{equation}
where $C$ depends only on $d$.
Also, it follows by Lemma \ref{newlemma}
and \eqref{eq:Lip v} that
\begin{equation}\label{step2-2}
\mathcal E^{\rm Sob}(v,B_{1/2})
\leq C \log^2 M_0.
\end{equation}
\\

\noindent
- {\em Step 3}. If $u$ is a stable solution $u$ of $(-\Delta)^{1/2} u = f(u)$ in $B_{R}$, given 
$x_o\in B_{R/2}$ we consider the function
$v(x):= u\big(x_o+\frac R 6x\big)$.
Note that this function satisfies
\eqref{step2} and \eqref{step2-2} with $M_0$ replaced by $M_0R$, hence the desired estimates follow easily by scaling and a covering argument.
\end{proof}

\section{Proof of Theorem \ref{thm}}\label{sect:thm}
We are given $u$ a stable solution of $(-\Delta)^{1/2} u + f(u)=0$ in $\R^3$
with $|u|\le 1$   and $f\in C^{0,\alpha}$,
and we want to show that $u$ is 1D. 
We split the proof in three steps.\\

\noindent
- {\em Step 1.}
By Proposition \ref{prop} we have
\begin{equation} \label{growth}
\mathcal E^{\rm Sob}(u; B_R) \le CR^2 \log^2 R 
\end{equation}
for all $R\ge 2$, where $C$ depends only on $f$.
Take $k \geq 1$, $R=2^{2(k+1)}$, and $v(x):= u(Rx)$. Note that, by elliptic regularity, $\|\nabla u\|_{L^\infty(\R^3)} \leq C$ for some constant $C$ depending only on $f$,
thus
$\|\nabla v\|_{L^\infty(\R^3)}\le CR$.
Also, $v$ is still a stable solution of a semilinear equation in all of $\R^3$.
Hence, using \eqref{2} in Lemma \ref{quadratic}  and then Lemma \ref{quadratic1}  with  $i=1$ and $R^i =R$, we obtain 
\begin{equation}\label{222}
\int_{B_{1/2}} |\nabla v| \,dx\le C\left(1 +\sqrt{ \frac1{k} \sum_{j=1}^{k} \frac{\mathcal E^{\rm Sob}(v; B_{2^{k+j}}) }{ k 2^{2(k+j)}} }\right). 
\end{equation}
On the other hand, using Lemma \ref{newlemma} and the bound $\|\nabla v\|_{L^\infty(\R^3)}\le CR$, we have
\begin{equation}\label{333}
\frac{\mathcal E^{\rm Sob}( v; B_{1/4})   }{  \log R } \le C
\left(1+\int_{B_{1/2}} |\nabla v| \,dx\right).
\end{equation}
Thus, recalling that  $R=2^{2(k+1)}= 4\cdot 2^{2k}$, $v(x)= u(Rx)$, rewriting \eqref{222}
and \eqref{333} in terms of $u$ we get (here we use that $d=3$) 
\begin{equation}\label{444}
\begin{split}
\frac{\mathcal E^{\rm Sob}( u; B_{2^{2k}})   }{  k2^{4k} }&\le C\left(1 +\sqrt{  \frac{1}{k}\sum_{j=1}^{k} \frac{\mathcal E^{\rm Sob}(u; B_{2^{3k+j+2}}) }{ k2^{2(k+j) }2^{4(k+1)}} }\right) 
\\&\le C\left(1 +\sqrt{  \frac{1}{2k}\sum_{\ell=1}^{2k} \frac{\mathcal E^{\rm Sob}(u; B_{2^{2(k+\ell)}}) }{ (k+\ell)2^{4(k+\ell)}} }\right) \qquad \forall\,k \geq 1.
\end{split}
\end{equation}

\noindent
- {\em Step 2.} Given $j \geq 1$ set
\[\mathcal A(j): =  \frac{\mathcal E^{\rm Sob}(u; B_{2^{2j}}) }{ j2^{4j}},\] 
so that
\eqref{444} can be rewritten as
\begin{equation}\label{555}
\mathcal A(k) \le C_f\left(1 +\sqrt{  \frac{1}{2k}\sum_{\ell=1}^{2k}  \mathcal A(k+\ell)}\right). 
\end{equation}
where $C_f$ depends only on $f$.
We claim that
\begin{equation}\label{goalstep2}
\mathcal A (k) \le M \quad \mbox{for all }k\ge 1
\end{equation}
for some  constant $M$ depending only on $f$.

Indeed, assume by contradiction that $\mathcal A(k_0) \ge M$ for some large constant $M$ to be chosen later. Rewriting \eqref{555} as
$$
2c_{f} \mathcal A(k_0) -1 \le  \sqrt{  \frac{1}{2k_0}\sum_{\ell=1}^{2k_0}  \mathcal A(k_0+\ell)},\qquad c_f :=\frac{1}{2C_f},
$$
then, provided $M\geq \frac{1}{c_f}$,
if $\mathcal A(k_0) \ge M$ we find
\begin{equation*}\label{6666}
c_f M \le   \sqrt{  \frac{1}{2k_0}\sum_{\ell=1}^{2k_0}  \mathcal A(k_0+\ell)}. 
\end{equation*}
This implies that there exists $k_1\in\{k_0+1,\ldots, 3k_0\}$  such that 
\[
c_f M \le  \sqrt{ \mathcal A(k_1)},
\]
that is
\[
(c_f M)^2 \le  \mathcal A(k_1). 
\]
Hence, choosing $M$ large enough so that $\tilde M := (c_fM)^2 \ge \frac{1}{c_f}$, we can repeat exactly the same argument as above with $M$ replaced by $\tilde M$ and $k_0$ replaced by $k_1$ in order to find  $k_2 \in \{k_1+1,3k_1\}$  such that
\[
(c_f \tilde M)^2  = c_f^6 M^4\le  \mathcal A(k_2). 
\]
Iterating further we find $k_1< k_2,<k_3< \ldots<k_m<\ldots$ such that $k_{m+1}\le 3k_m$ and 
\[
c_f^{2^{m+1}-2}M^{2^m} \le  \mathcal A(k_m). 
\]
Now, ensuring that $M$ is large enough so that $\theta :=  c_f^2 M >1$, we obtain 
\begin{equation}\label{expg}
c_f^{-2}\theta^{2^m} \le  \mathcal A(k_m). 
\end{equation}
On the other hand, recalling \eqref{growth} and using that $k_m\le 3^{m} k_0$, we have 
\begin{equation}
\label{ling}
\mathcal A(k_m) \le C \log (3^m k_0) \le C(m+ \log k_0).
\end{equation}
The linear bound from \eqref{ling} clearly contradicts the exponential growth in \eqref{expg} for $m$ large enough. Hence, this provides the desired contradiction and proves
\eqref{goalstep2}\\

\noindent
- {\em Step 3.} Rephrasing \eqref{goalstep2}, we proved that
\begin{equation}\label{egest}
\mathcal E^{\rm Sob}( u; B_{R})   \le CR^2\log R
\end{equation}
for all $R\ge 2$, where $C$ depends only on $f$. In other words, we have 
obtained an optimal energy estimate in large balls $B_R$ 
(note that 1D profiles saturates \eqref{egest}).
Having improved the energy bound of Proposition \ref{prop} from $R^2\log^2R$ to  \eqref{egest},
we now conclude that $u$ is a 1D profile as follows.

Given $\boldsymbol v\in \mathbb S^{d-1}$ and using the perturbation $\mathcal P^2_{t,\boldsymbol v}$ as in \eqref{perturb}-\eqref{perturb2}, it follows by 
\eqref{3}, \eqref{near2}, and \eqref{blabla} that
$$
\iint_{B_{1/2}\times B_{1/2} }
\bigl(\partial_{\boldsymbol v} u(x)\bigr)_+
\bigl(\partial_{\boldsymbol v} u(y)\bigr)_-\,dx\,dy  \le  \frac{C}{\log\log R}.
$$
Hence, taking the limit 
as $R\to \infty$ we find that 
$$
\iint_{B_{1/2}\times B_{1/2} }
\bigl(\partial_{\boldsymbol v} u(x)\bigr)_+
\bigl(\partial_{\boldsymbol v} u(y)\bigr)_-\,dx\,dy =0,
$$
thus
$$
\text{either}\qquad\partial_{\boldsymbol v} u \ge 0 \quad \mbox{in } B_{1/2} \qquad \mbox{or} \qquad \partial_{\boldsymbol v} u \le 0  \quad\mbox{in } B_{1/2}\qquad \forall\,\boldsymbol v\in \mathbb S^{d-1}.
$$
Since this argument can be repeated changing the center of the ball $B_{1/2}$ with any other point, by a continuity argument we obtain that
$$
\text{either}\qquad\partial_{\boldsymbol v} u \ge 0 \quad \mbox{in } \R^3 \qquad \mbox{or} \qquad \partial_{\boldsymbol v} u \le 0  \quad\mbox{in } \R^3\qquad \forall\,\boldsymbol v\in \mathbb S^{d-1}.
$$
Thanks to this fact, we easily conclude that $u$ is a 1D monotone function, as desired. 
\qed


\end{document}